\documentclass[11pt]{article}

% packages
\usepackage{amsmath}
\usepackage{amsthm}
\usepackage{amssymb}
\usepackage[numbers,sort&compress]{natbib}
\usepackage{mathtools}
\usepackage{hyperref}
\usepackage{xcolor}

% theorem environments
\theoremstyle{plain}
\newtheorem{theorem}{Theorem}[section]
\newtheorem{lemma}[theorem]{Lemma}
\newtheorem{corollary}[theorem]{Corollary}

\theoremstyle{definition}

\newtheorem{example}[theorem]{Example}

\theoremstyle{remark}

% geometry
\textwidth=160truemm
\textheight=225truemm
\evensidemargin=0mm
\oddsidemargin=0mm 
\topmargin=0mm 
\headsep=0mm
\parindent=2em

%========================================================================
%   LATEX MACROS
%========================================================================

%   SYMBOLS
\newcommand{\z}{\zeta}

\newcommand{\R}{\mathbb{R}}

\DeclareMathOperator{\Ls}{Ls}
\DeclareMathOperator{\Li}{Li}
\DeclareMathOperator{\ls}{ls}
\DeclareMathOperator{\re}{Re}
\DeclareMathOperator{\im}{Im}
\newcommand{\abs}[1]{\left|#1\right|}                % |  .  |
\newcommand{\bra}[1]{\left(#1\right)}                % (  .  )
\newcommand{\sqbra}[1]{\left[#1\right]}              % [  .  ]
\newcommand{\set}[1]{\left\{#1\right\}}              % {  .  }
                  % (  .  )
\newcommand{\lgs}[2]{\log\bra{\frac{\sin #1}{\sin #2}}}

\DeclareMathOperator{\lsh}{lsh}
\newcommand{\lgsh}[2]{\log\bra{\frac{\sinh #1}{\sinh #2}}}
\newcommand{\tlgsh}[2]{\log\bra{\tfrac{\sinh #1}{\sinh #2}}}

%========================================================================
%   TITLE AND AUTHORS
%========================================================================

\title{\bf Closed-form evaluations of log-sine integrals and Apéry-like sums in terms of polylogarithms}
\author{Noam Shalev}

%\author{Noam Shalev\thanks{Corresponding author. Einstein Institute of Mathematics,
%		The Hebrew University of Jerusalem, Israel.
%		\\\textit{E-mail}: \texttt{noam.shalev2@mail.huji.ac.il}}}

\date{}

\newcommand{\Address}{{
		\bigskip
		\footnotesize
		N.~Shalev, \textsc{Einstein Institute of Mathematics,
			The Hebrew University of Jerusalem, Israel}\par\nopagebreak
		\textit{E-mail address}: \texttt{noam.shalev2@mail.huji.ac.il}
		
}}

\begin{document}
\maketitle

% ABSTRACT
\begin{abstract}
We present a new systematic method for evaluating generalized log-sine integrals in terms of polylogarithms. Our approach is based on an identity connecting ordinary generating functions of polylogarithms to integrals involving the sine function. This method provides closed-form expressions for log-sine integrals of weight up to 4 using only classical polylogarithms, while higher weights require Nielsen polylogarithms.
Later we generalize this identity and show how it gives rise to numerous Apéry-like formulae extending results of Koecher, Leshchiner and others. We also derive hyperbolic analogues and recover several functional equations between Nielsen polylogarithms. In the process, we derive new parametric identities similar to those given by Saha and Sinha.
\end{abstract}
%\noindent\textbf{Keywords:} Log-sine integrals, Polylogarithms, Apéry-like, Central binomial, Euler sums

\section{Introduction and Preliminaries}
Classically, the \emph{log-sine integrals} are defined as
\[
\Ls_m(\theta)\coloneqq -\int_0^{\theta} \log\bra{2\sin \frac{t}{2}}^{m-1} dt,
\]
where $m\in \mathbb{N}$ is the \emph{weight} of the integral and $0\leq \theta \leq \pi$. These integrals, together with their moments (\emph{generalized log-sine integrals})
\[
\Ls_m^{(k)}(\theta)\coloneqq -\int_0^{\theta} t^k\,\log\bra{2\sin \frac{t}{2}}^{m-1-k} dt,\quad \qquad(0\leq k< m),
\]
 were widely studied, tracing back to investigations by Nielsen (1906) \cite{nielsen1906} and Bowman (1947) \cite{bowman47}, and the foundational work of
 Lewin (1958) \cite{lewin58,lewin81}. Recent decades have seen renewed interest in such integrals \cite{nyw95,ccs09,bs11,choi13,orr19, cglcz22}, motivated in part by their appearance in physics applications, particularly in  higher-order terms of the $\epsilon$-expansion of Feynman diagrams \cite{fk99,kv00, dk01}.
  
  A main goal in the study of these integrals
   is to find closed-form expressions for them in terms of other well-studied special functions. These often include integer values of the \emph{Riemann zeta function} 
 $\z(s)\coloneqq \sum_{n\geq 1}n^{-s}$ ($\re s >1$),
  real/complex parts of \emph{polylogarithms} $\Li_m(x)\coloneqq \sum_{n\geq 1} x^n n^{-m}$ ($|x|< 1$), and values of generalizations of the classical polylogarithm, which we will shortly discuss. 
  Previous work on log-sine integrals has largely focused on  $\theta\in \set{\pi/3,\pi/2,2\pi/3,\pi}$ \cite{lewin81,zucker85, dk00,dk01}. In \cite{dk00} several formulas for general values of $\theta$ were given, 
  involving \emph{Nielsen polylogarithms} (see \S~\ref{sec:higher-weight} below), and in \cite{bs11, bs15} it was demonstrated that all generalized log-sine integrals admit closed-forms in terms of Nielsen polylogarithms.

The goal of this paper is to present a new method for finding closed-form expressions of log-sine integrals in terms of classical polylogarithms for low weights ($m\leq 4$), and in terms of Nielsen polylogarithms for higher weights.  We prove a simple yet powerful identity (Theorem~\ref{thm:sum-no-prod} below) connecting the ordinary generating function $f(a)$ of $\Li_m(1-e^{-2 iz})$, $m\in \mathbb{N}$, to integrals involving $(\sin t)^a$, and compare coefficients of $a^m$ to obtain identities involving log-sine integrals. We then show how to generalize this method to obtain closed-forms for all generalized log-sine integrals, recovering in the process some relations between Nielsen polylogarithms (cf. \cite{bs15, cgr21}). Our methods ultimately rely on a remarkable infinite series identity given in Theorem~\ref{thm:sum-t-1-t} (discussed also in \S~\ref{sec:further}). 

 The relative elegance of the identities we find makes a strong case for considering \emph{normalized} log-sine integrals,
\begin{equation*}
\ls_m^{(k)}(z)\coloneqq -\int_0^z t^k \log\bra{\frac{\sin t}{\sin z}}^{m-1-k}dt, \qquad \bra{0< z \leq \frac{\pi}{2}},
\end{equation*}
 which relate to the classical log-sine integrals via
 \begin{align*}
\Ls_m^{(k)}(2z) = 2^{k+1}\sum_{j=0}^{m-k-1}\binom{m-k-1}{j} \log(2\sin z)^j 
\ls_{m-j}^{(k)}(z)
 \end{align*}
(in particular, $\Ls_m^{(k)}\bra{\pi/3} =2^{k+1} \ls_m^{(k)}\bra{\pi/6}$). 
The integral 
$
-\int_0^z (z-t)^k \lgs{t}{z}^{m-1-k}dt
$
is another variant which arises naturally in this context; we refer to such integrals as \emph{shifted} normalized log-sine integrals.

Another motivation for the evaluation of log-sine integrals is their connection to Apéry-like sums. In his celebrated 1979 paper \cite{apery79}, Apéry published the first proof of the irrationality of $\z(3)$ (and a new proof of the irrationality of $\z(2)$). The starting point for his approach was the two identities
\begin{equation}\label{eq:Apery-sums}
\z(2)=3\sum_{n=1}^\infty \frac{1}{n^2 \binom{2n}{n}},\qquad
\z(3)=\frac{5}{2}\sum_{n=1}^\infty \frac{(-1)^{n-1}}{n^3 \binom{2n}{n}}.
\end{equation}

Owing to the success of Apéry's proof, much work has since focused on the evaluation of sums of the form $\sum_{n\geq 1} a_n n^{-m}\binom{2n}{n}^{-1}$, where the sequence $a_n$ often involves a combination of polynomials, exponentials, and (generalized) harmonic numbers (recall that the classical harmonic numbers are given by $H_N=\sum_{n=1}^N \frac{1}{n}$, and the generalized harmonic numbers of order $m\in \mathbb{N}$ are given by 
$H_N^{{(m)}}=\sum_{n=1}^N n^{-m}$, with the convention that $H_0^{(m)}=0$). Such sums are frequently referred to as \emph{Apéry-like} sums \cite{rivoal04,bbb06,sun15,chu20,au20,jlr23} or \emph{central binomial} sums \cite{bbk01} (due to the involvement of the central binomial coefficients). The central binomial sums $S(m)\coloneqq \sum_{n=1}^\infty n^{-m}\binom{2n}{n}^{-1}$ (which were considered in e.g. \cite{zucker85,plouffe98,dk01, bbk01}) are intimately linked with log-sine integrals via the more general identities \cite{kv00,bbk01}
\begin{align}
\label{eq:ls-m-1-series}
 \ls^{(1)}_m(z)
&=
 -\int_0^z t \lgs{t}{z}^{m-2}dt
= \frac{(-1)^{m-1}(m-2)!}{2^m}
\sum_{n=1}^\infty \frac{(2 \sin z)^{2n}}{n^m \binom{2n}{n}}, 
\\\notag
\ls^{(0)}_m(z)
&=
-\int_0^z  \lgs{t}{z}^{m-1}dt
= (-1)^{m-1}(m-1)!
\sum_{n=1}^\infty \binom{2n}{n}\frac{(\sin z)^{2n+1}}{2^{2n}(2n+1)^m},
\end{align}
with $m\geq 2$, which can be obtained, among similar expressions for other normalized log-sine integrals, using the well-known Taylor expansions of $\arcsin$-powers, 
\begin{equation}\label{eq:arcsin-even}
z^{2m}=\frac{(2m)!}{2^{2m}}\sum_{n=1}^\infty \frac{(2 \sin z)^{2n} }{n^2 \binom{2n}{n}}\sum_{0<n_1<\ldots< n_{m-1}< n-1} \frac{1}{(n_1n_2\cdots n_{m-1})^2},
\end{equation}
\begin{equation}\label{eq:arcsin-odd}
z^{2m-1}=(2m-1)!\sum_{n=0}^\infty \binom{2n}{n}\frac{( \sin z)^{2n+1} }{2^{2n} (2n+1)}\sum_{0\leq n_1<\ldots< n_{m-1}< n-1} \frac{1}{((2n_1+1)\cdots (2n_{m-1}+1))^2},
\end{equation}
valid for $0\leq z \leq \frac{\pi}{2}$
(and where the inner sums on the right-hand side can be expressed in terms of generalized harmonic numbers via the Newton-Girard formulae).

\section{Evaluation of low-weight log-sine integrals}
In this section we show how to easily obtain closed-form expressions for $\ls_m^{(k)}$ with $m\leq 4$ (and also $\ls_5^{(1)}$). We first note the trivial identity 
$\ls_m^{(m-1)}(z)= -z^m/m$ for $m\in \mathbb{N}$. We can also easily express $\ls_{m}^{(m-2)}(z)$ in terms of polylogarithms using the well-known Fourier expansion
\begin{equation}\label{eq:fourier}
\log(2 \sin t) = -\sum_{n=1}^\infty \frac{\cos(2 tn)}{n}= -\re \sum_{n=1}^\infty \frac{e^{-2 i t n}}{n},
\end{equation}
which immediately implies the following formula.
\begin{lemma}\label{lem:ls-m-m-2}
	For $0< z \leq \frac{\pi}{2}$ and $m\geq 2$,
\begin{equation*}
\ls_{m}^{(m-2)}(z)=\frac{z^{m-1}}{m-1}\log(2\sin z)+(m-2)! \re \sqbra{
\frac{ \zeta(m)}{(2i)^{m-1}} - \sum_{k=0}^{m-2} \frac{ z^{m-2-k} \Li_{k+2}(e^{-2 i z})}{(2i)^{k+1}(m-2-k)!} 
}.
\end{equation*}
\end{lemma}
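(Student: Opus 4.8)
The plan is to start from the definition with $k=m-2$, where the exponent $m-1-k$ collapses to $1$, so that
\[
\ls_m^{(m-2)}(z)=-\int_0^z t^{m-2}\log\bra{\frac{\sin t}{\sin z}}\,dt
=-\int_0^z t^{m-2}\log(2\sin t)\,dt+\log(2\sin z)\int_0^z t^{m-2}\,dt .
\]
The second integral is elementary and produces exactly the term $\frac{z^{m-1}}{m-1}\log(2\sin z)$ in the statement, so everything reduces to evaluating $J_m(z):=-\int_0^z t^{m-2}\log(2\sin t)\,dt$ in closed form. (Note that $m\ge 2$, so $t^{m-2}$ is a genuine non-negative power and the only delicate behavior of the integrand is the logarithmic blow-up of $\log(2\sin t)$ at $t=0$, which is still absolutely integrable.)

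For $J_m(z)$ I would substitute the Fourier expansion \eqref{eq:fourier}, $\log(2\sin t)=-\re\sum_{n\ge1}e^{-2itn}/n$, and — after justifying the interchange of summation and integration (see below) — write $J_m(z)=\re\sum_{n\ge1}\frac1n\int_0^z t^{m-2}e^{-2itn}\,dt$. Each inner integral is handled by repeated integration by parts (equivalently, induction on $m$), giving
\[
\int_0^z t^{m-2}e^{-2itn}\,dt=\frac{(m-2)!}{(2i)^{m-1}n^{m-1}}-(m-2)!\,e^{-2izn}\sum_{k=0}^{m-2}\frac{z^{m-2-k}}{(m-2-k)!\,(2i)^{k+1}\,n^{k+1}},
\]
where the first term is the contribution of the endpoint $t=0$ and the sum is that of $t=z$; the powers of $-2i$ simplify to powers of $2i$ by absorbing the alternating signs. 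Dividing by $n$ and summing, $\sum_n n^{-m}$ becomes $\zeta(m)$ and each $\sum_n e^{-2izn}n^{-(k+2)}$ becomes $\Li_{k+2}(e^{-2iz})$, so
\[
J_m(z)=(m-2)!\,\re\sqbra{\frac{\zeta(m)}{(2i)^{m-1}}-\sum_{k=0}^{m-2}\frac{z^{m-2-k}\Li_{k+2}(e^{-2iz})}{(m-2-k)!\,(2i)^{k+1}}}.
\]
Adding back $\frac{z^{m-1}}{m-1}\log(2\sin z)$ yields the asserted identity.

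The one genuinely non-routine point is the term-by-term integration, since the Fourier series for $\log(2\sin t)$ fails to converge uniformly on $[0,z]$. The clean way around this is to integrate on $[\varepsilon,z]$, where the series does converge uniformly — this is Dirichlet's test, using that $\sum_{n\le N}\cos(2tn)$ is bounded uniformly on compact subsets of $(0,\pi)$ — and then let $\varepsilon\to0^+$. On the left, $\int_\varepsilon^z\to\int_0^z$ by dominated convergence (absolute integrability of $t^{m-2}\log(2\sin t)$); on the right, one passes the limit inside the sum using the a posteriori bound that the $n$-th summand $\frac1n\int_\varepsilon^z t^{m-2}e^{-2itn}\,dt$ is $O(n^{-2})$ uniformly in $\varepsilon$ (already the $k=0$ piece carries the factor $n^{-2}$ and all later pieces decay faster). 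Everything else — the integration by parts, the tidying of powers of $2i$, and the final regrouping into the displayed form — is routine bookkeeping.
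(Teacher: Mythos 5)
Your proof is correct and takes essentially the same route the paper intends: Lemma~\ref{lem:ls-m-m-2} is stated there as an immediate consequence of the Fourier expansion \eqref{eq:fourier}, integrated term by term against $t^{m-2}$ after splitting off the elementary $\log(2\sin z)$ piece, which is precisely your computation. The only addition is your careful justification of the interchange of summation and integration, which the paper leaves implicit.
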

Our main method is based on the following seemingly simple identity (whose generalizations we discuss in \S~\ref{sec:apery}).
\begin{theorem}\label{thm:sum-no-prod} For $0< z <\frac{\pi}{6}$ and 
 $-1<a<1$, we have
\begin{equation}\label{eq:int-exp}
\sum_{n=1}^\infty \frac{\bra{1-e^{-2 i z}}^n}{n+a} = 2 i \int_0^z \exp\bra{a\sqbra{\log\bra{\frac{\sin t}{\sin z}}+i(z-t)}}dt.
\end{equation}
\end{theorem}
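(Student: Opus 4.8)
The plan is to evaluate the right-hand side directly and show it equals the left-hand side. First I would simplify the integrand. Writing $\log\left(\frac{\sin t}{\sin z}\right) + i(z-t)$ as a single logarithm, note that $e^{i(z-t)} = \frac{e^{iz}}{e^{it}}$, so
\[
\exp\left(\log\frac{\sin t}{\sin z} + i(z-t)\right) = \frac{\sin t}{\sin z}\cdot \frac{e^{iz}}{e^{it}} = \frac{e^{it}-e^{-it}}{e^{iz}-e^{-iz}}\cdot e^{iz-it} = \frac{1 - e^{-2it}}{1-e^{-2iz}}.
\]
Hence the integrand is $\left(\frac{1-e^{-2it}}{1-e^{-2iz}}\right)^a$. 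For $0 < t \le z < \pi/6$ the quantity $1-e^{-2it}$ lies strictly inside the right half-plane (its argument is $\frac{\pi}{2}-t \in (\frac{\pi}{3},\frac{\pi}{2})$), so this is an unambiguous principal power, and the restriction $z < \pi/6$ is exactly what keeps $|1-e^{-2iz}| < 1$ so that the series on the left converges. Thus the right-hand side becomes $\frac{2i}{(1-e^{-2iz})^a}\int_0^z (1-e^{-2it})^a\,dt$.

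Next I would substitute $u = 1 - e^{-2it}$, so that $du = 2i e^{-2it}\,dt = 2i(1-u)\,dt$, giving $2i\,dt = \frac{du}{1-u}$. As $t$ runs from $0$ to $z$, $u$ runs along an arc from $0$ to $w := 1-e^{-2iz}$, and since the integrand $\frac{u^a}{1-u}$ is holomorphic on the disk $|u|<1$ (which contains this arc, as $|w|<1$), the contour integral equals the straight-line integral, or more simply we may just antidifferentiate the power series. Expanding $\frac{1}{1-u} = \sum_{k\ge 0} u^k$ gives
\[
\int_0^{w} \frac{u^a}{1-u}\,du = \sum_{k=0}^\infty \int_0^{w} u^{a+k}\,du = \sum_{k=0}^\infty \frac{w^{a+k+1}}{a+k+1} = \sum_{n=1}^\infty \frac{w^{a+n}}{a+n} = w^a \sum_{n=1}^\infty \frac{w^n}{n+a}.
\]
Therefore the right-hand side of \eqref{eq:int-exp} equals $w^{-a}\cdot w^a \sum_{n\ge 1} \frac{w^n}{n+a} = \sum_{n\ge 1}\frac{(1-e^{-2iz})^n}{n+a}$, which is the left-hand side.

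The main point requiring care is the branch-of-logarithm bookkeeping: one must check that $\log\frac{\sin t}{\sin z} + i(z-t)$ is genuinely the principal logarithm of $\frac{1-e^{-2it}}{1-e^{-2iz}}$ (equivalently, that the imaginary part $z-t$ stays in $(-\pi,\pi]$, which is immediate, and that the real-plus-imaginary combination tracks the principal branch consistently along the path), and that all interchanges of sum and integral are justified — both are routine given $|1-e^{-2iz}|<1$ and uniform convergence on the compact arc, but the hypothesis $z<\pi/6$ is used precisely here and should be invoked explicitly. I would also remark that the identity extends by analytic continuation in $a$ to all $a$ with $\mathrm{Re}\,a > -1$, and in $z$ to the full range $0 < z \le \pi/2$ upon interpreting the left-hand side as an Abel limit, though this is not needed for the stated version.
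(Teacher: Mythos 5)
Your proposal is correct and follows essentially the same route as the paper: both reduce the integrand to $\bigl((1-e^{-2it})/(1-e^{-2iz})\bigr)^a$, expand the factor $1/(1-u)$ (the paper does this in the variable $t$ by writing $2ie^{-2it}\sum_{n\ge1}(1-e^{-2it})^{n+a-1}$, you via the substitution $u=1-e^{-2it}$), and integrate term by term using $|1-e^{-2iz}|<1$ for $z<\pi/6$. The only cosmetic difference is your explicit change of variables and the branch-of-logarithm verification, which the paper handles with a one-line remark.
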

\begin{proof}
Let $0< z < \frac{\pi}{6}$. For $0\leq t \leq z$ we have 
$0<|1-e^{-2 i t}|=|2 \sin t| <1 $, so we can integrate term-by-term in the following calculation:
 \begin{align*}
 2i &\int_0^z \bra{\frac{\sin t}{\sin z}}^a e^{ia(z-t)}dt
 =2i (1-e^{-2iz})^{-a}\int_0^z (1-e^{-2it})^a dt
 \\&= (1-e^{-2iz})^{-a}\int_0^z2ie^{-2i t} \sum_{n=1}^\infty (1-e^{-2it})^{n+a-1} dt
\\&=(1-e^{-2iz})^{-a} \sum_{n=1}^\infty \sqbra{\frac{(1-e^{-2 it})^{n+a}}{n+a}}_{t=0}^{t=z}=\sum_{n=1}^\infty \frac{\bra{1-e^{-2 i z}}^n}{n+a}.
 \end{align*}
 We note that in \eqref{eq:int-exp} we assume the principal branch of logarithm. 
\end{proof}

Expanding both sides of \eqref{eq:int-exp} in powers of $a$ and comparing coefficients, we obtain the following simple formula for $\Li_m(1-e^{-2 i z})$ in terms of shifted normalized log-sine integrals, which extends to $0< z < \frac{\pi}{2}$ by analytic continuation.
\begin{corollary}\label{cor:Li_n}
Let $0< z <\frac{\pi}{2}$. For $m\geq 0$, we have
\begin{equation}
(-1)^m \Li_{m+1}(1-e^{-2i z}) = \frac{2i}{m!}\int_0^z\bra{\log\bra{\frac{\sin t}{\sin z}}+i(z-t)}^m dt.
\end{equation}
\end{corollary}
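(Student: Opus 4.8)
The plan is to extract Corollary~\ref{cor:Li_n} directly from Theorem~\ref{thm:sum-no-prod} by expanding both sides of \eqref{eq:int-exp} as power series in $a$ and comparing the coefficients of $a^m$. On the left-hand side, I would write $\frac{1}{n+a} = \frac{1}{n}\cdot\frac{1}{1+a/n} = \sum_{m\geq 0} (-1)^m a^m n^{-m-1}$, which is valid for $|a|<1\leq n$, and then interchange the sum over $n$ with the sum over $m$ (justified by absolute convergence, since $|1-e^{-2iz}|<1$ for $0<z<\pi/6$); this turns the left side into $\sum_{m\geq 0} (-1)^m a^m \sum_{n\geq 1} \frac{(1-e^{-2iz})^n}{n^{m+1}} = \sum_{m\geq 0}(-1)^m a^m \Li_{m+1}(1-e^{-2iz})$. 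On the right-hand side, I would expand the exponential $\exp(a\,w(t))$, where $w(t) = \log(\sin t/\sin z) + i(z-t)$, as $\sum_{m\geq 0}\frac{a^m}{m!}w(t)^m$, interchange this sum with $\int_0^z$, and read off the coefficient of $a^m$ as $\frac{1}{m!}\int_0^z w(t)^m\,dt$. Matching coefficients yields $(-1)^m\Li_{m+1}(1-e^{-2iz}) = \frac{2i}{m!}\int_0^z\big(\log(\sin t/\sin z)+i(z-t)\big)^m\,dt$ for $0<z<\pi/6$.

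Next I would address the analytic continuation to $0<z<\pi/2$. Both sides are analytic functions of $z$ in a neighborhood of the real interval $(0,\pi/2)$: the right-hand side is an integral whose integrand depends analytically on $z$ (note $\log(\sin t/\sin z)$ has no singularity for $t\in(0,z]$ and $z\in(0,\pi/2)$, with the $t\to 0^+$ endpoint giving only an integrable logarithmic singularity), and the left-hand side is $\Li_{m+1}$ evaluated at $1-e^{-2iz}$, which traces a path inside or on the unit circle and avoids the branch point of $\Li_{m+1}$ at $1$ as long as $z\neq 0$. Since the two sides agree on the sub-interval $(0,\pi/6)$, they agree on all of $(0,\pi/2)$ by the identity theorem.

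The main obstacle is the rigorous justification of the two interchanges of summation and integration on the right-hand side — specifically, bounding $w(t)^m$ near $t=0$, where $\log(\sin t/\sin z)\to -\infty$. I would handle this by noting that on $[0,z]$ one has $|w(t)| \leq |\log(\sin t/\sin z)| + z$, and $\int_0^z |\log(\sin t/\sin z)|^m\,dt < \infty$ for every $m$ (the integrand behaves like $|\log t|^m$ near $0$, which is integrable); combined with the factor $1/m!$, the series $\sum_m \frac{|a|^m}{m!}\int_0^z |w(t)|^m\,dt$ converges for every fixed $a$, so Fubini/Tonelli applies and the interchange is legitimate. This is the only genuinely non-routine point; the coefficient comparison itself and the continuation argument are straightforward.
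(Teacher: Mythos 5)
Your proof is correct and follows essentially the same route the paper takes (and merely sketches): expand both sides of \eqref{eq:int-exp} in powers of $a$, compare coefficients of $a^m$, and extend from $0<z<\tfrac{\pi}{6}$ to $0<z<\tfrac{\pi}{2}$ by analytic continuation. One small overstatement: since $\int_0^1\abs{\log u}^m\,du=m!$, your majorant $\sum_m \frac{\abs{a}^m}{m!}\int_0^z\abs{w(t)}^m\,dt$ behaves like a geometric series and converges only for $\abs{a}$ sufficiently small (indeed $\int_0^z(\sin t/\sin z)^{-\abs{a}}\,dt$ diverges for $\abs{a}\geq 1$), not ``for every fixed $a$'' --- but validity in a neighborhood of $a=0$ is all that coefficient comparison requires, so the argument stands.
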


\begin{example}\label{ex:Li-n} For $0< z < \frac{\pi}{2}$,
\begin{align}
\re \Li_3(1-e^{-2 iz})&= -2\int_0^z (z-t)\lgs{t}{z}dt \label{eq:re-li3}
\\
\re \Li_4(1-e^{-2 iz})&= \frac13\int_0^z \sqbra{3(z-t)\lgs{t}{z}^2- (z-t)^3}dt
\label{eq:re-li4}
\\
\re \Li_5(1-e^{-2 iz})&=- \frac13\int_0^z \sqbra{(z-t)\lgs{t}{z}^3-(z-t)^3\lgs{t}{z} }dt
\end{align}
and likewise,
\begin{align}
\im \Li_2(1-e^{-2 iz})&= -2\int_0^z  \lgs{t}{z} dt \label{eq:im-li2}\displaybreak[1]
\\
\im \Li_3(1-e^{-2 iz})&= \int_0^z \sqbra{ \lgs{t}{z}^2-(z-t)^2 }dt\label{eq:im-li3}\displaybreak[1]
\\
\im \Li_4(1-e^{-2 iz})&= -\frac13\int_0^z \sqbra{ \lgs{t}{z}^3-3(z-t)^2\lgs{t}{z} }dt\label{eq:im-li4}\displaybreak[1]
\\
\im \Li_5(1-e^{-2 iz})&= \frac{1}{12}\int_0^z \sqbra{ \lgs{t}{z}^4-6(z-t)^2\lgs{t}{z}^2+(z-t)^4 }dt\label{eq:im-li5}
\end{align}
\end{example}
Note that equations \eqref{eq:im-li2} and \eqref{eq:re-li3} respectively read $\im \Li_2(1-e^{-2 iz})=2\ls_2^{(0)}(z)$ and 
$\re \Li_3(1-e^{-2 iz}) = 2z\ls_2^{(0)}(z)-2 \ls_3^{(1)}(z)$. Comparing with the expressions for $\ls_2^{(0)}$ and $\ls_3^{(1)}$ indicated by Lemma~\ref{lem:ls-m-m-2}, we recover the following two polylogarithms identities\footnote{Equation \eqref{eq:identity-imli2} also follows from the dilogarithm reflection formula, and \eqref{eq:identity-reli3} appears in \cite[Eq. (6.54)]{lewin81}.}:
\begin{align}
\im \Li_2(e^{-2 i z})+\im\Li_2(1-e^{-2 iz}) &= 2z \log(2 \sin z),
\label{eq:identity-imli2}
\\
\re \Li_3(e^{-2 i z}) +2 \re\Li_3(1-e^{-2 i z}) &= \z(3) + 2 z^2 \log(2 \sin z).
\label{eq:identity-reli3}
\end{align}
From equation \eqref{eq:im-li3} we immediately reproduce the identity
\begin{equation}\label{eq:ls-3-0}
\ls_3^{(0)}(z)=-\int_0^z \lgs{t}{z}^2 dt = -\frac{z^3}{3}-\im\Li_3(1-e^{-2iz}),
\end{equation}
which was given (in a non-normalized log-sine form) in \cite[Eq. (6.56)]{lewin81}. The normalized form \eqref{eq:ls-3-0} was  also given in \cite{rs15} and extended by Reshetnikov \cite{rs15, cd19}
to a closed-form for the hypergeometric series 
$_4F_3(\frac12,\frac12,\frac12,\frac12; \frac32,\frac32,\frac32;z)$ for all $z\in \mathbb{C}$.

Theorem~\ref{thm:sum-no-prod} allows us to give
closed-forms in terms of classical polylogarithms for $\ls_m^{(1)}(z)$ with $m\in\{3,4,5\}$ (recall the series expansion \eqref{eq:ls-m-1-series} of $\ls_m^{(1)}(z)$):
\begin{corollary}\label{cor:ls-n-1}
We have, for $0< z < \frac{\pi}{2}$,
\begin{align}
\ls_3^{(1)}(z)&= \frac{1}{8} \sum_{n=1}^\infty \frac{(2\sin z)^{2n}}{n^3 \binom{2n}{n}} 
= \frac{z}{2} \im \Li_2(1-e^{-2 iz})-\frac12 \re \Li_3(1-e^{-2i z})
\label{eq:ls-3-1}\displaybreak[1]
\\
\ls_4^{(1)}(z)&= -\frac{1}{8} \sum_{n=1}^\infty \frac{(2\sin z)^{2n}}{n^4 \binom{2n}{n}} 
= -\frac{z^4}{4} + \re \Li_4(1-e^{-2 iz}) - z \im \Li_3(1-e^{-2 iz}) \label{eq:ls-4-1}\displaybreak[1]
\\
\ls_5^{(1)}(z)&= \frac{3}{16} \sum_{n=1}^\infty \frac{(2\sin z)^{2n}}{n^5 \binom{2n}{n}} 
=\frac{3z^4}{4}\log(2 \sin z)+\frac{3}{8}\z(5)+\frac{3 z^2}{4}\z(3) 
\label{eq:ls-5-1}
\\&\quad+\frac{3z}{4}\bra{\im\Li_4(e^{-2iz})+4\im\Li_4(1-e^{-2iz})} -\frac{3}{8}\bra{\re\Li_5(e^{-2iz})+8\re\Li_5(1-e^{-2iz})}\notag
\end{align}
\end{corollary}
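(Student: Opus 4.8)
The plan is to reduce each first-moment integral $\ls_m^{(1)}(z)=-\int_0^z t\,\lgs{t}{z}^{m-2}dt$ to quantities already evaluated in Example~\ref{ex:Li-n} and in \eqref{eq:ls-3-0}. The one elementary manoeuvre is to write $t=z-(z-t)$, giving
\[
\ls_m^{(1)}(z)=-z\int_0^z\lgs{t}{z}^{m-2}dt+\int_0^z(z-t)\lgs{t}{z}^{m-2}dt ,
\]
that is, a ``zeroth-moment'' normalized log-sine integral plus a shifted one. The first (series) equality in each line of the statement is just \eqref{eq:ls-m-1-series}, so only the polylogarithmic closed forms require proof.

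For $m=3$ and $m=4$ no new input is needed. When $m=3$, \eqref{eq:im-li2} gives $\int_0^z\lgs{t}{z}dt=-\tfrac12\im\Li_2(1-e^{-2iz})$ and \eqref{eq:re-li3} gives $\int_0^z(z-t)\lgs{t}{z}dt=-\tfrac12\re\Li_3(1-e^{-2iz})$; substituting yields \eqref{eq:ls-3-1}. When $m=4$, \eqref{eq:ls-3-0} gives $\int_0^z\lgs{t}{z}^2dt=\tfrac{z^3}{3}+\im\Li_3(1-e^{-2iz})$, while \eqref{eq:re-li4} together with $\int_0^z(z-t)^3dt=z^4/4$ gives $\int_0^z(z-t)\lgs{t}{z}^2dt=\re\Li_4(1-e^{-2iz})+\tfrac{z^4}{12}$; substituting and combining the rational multiples of $z^4$ yields \eqref{eq:ls-4-1}.

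The weight-$5$ case carries the actual work. Here the splitting leaves two $L^3$-integrals, which \eqref{eq:im-li4} and the $\re\Li_5$ identity of Example~\ref{ex:Li-n} (the real part of Corollary~\ref{cor:Li_n} at index $m=4$) convert, respectively, into
\[
\int_0^z\lgs{t}{z}^3dt=-3\im\Li_4(1-e^{-2iz})+3\!\int_0^z(z-t)^2\lgs{t}{z}dt,
\]
\[
\int_0^z(z-t)\lgs{t}{z}^3dt=-3\re\Li_5(1-e^{-2iz})+\int_0^z(z-t)^3\lgs{t}{z}dt .
\]
The two remaining shifted integrals carry a single power of the log, so they fall to the Fourier expansion \eqref{eq:fourier} directly (equivalently, to Lemma~\ref{lem:ls-m-m-2} after expanding $(z-t)^k$ by the binomial theorem): integrating $(z-t)^k$ against $e^{-2int}$ by repeated parts and summing over $n$ gives
\[
\int_0^z(z-t)^k\lgs{t}{z}dt=-\frac{z^{k+1}}{k+1}\log(2\sin z)-\re\!\sqbra{\sum_{j=0}^{k}\frac{(-1)^j k!\,z^{k-j}}{(k-j)!\,(2i)^{j+1}}\z(j+2)-\frac{(-1)^k k!}{(2i)^{k+1}}\Li_{k+2}(e^{-2iz})} .
\]
For $k=2$ and $k=3$ the $\z(2)$ and $\z(4)$ contributions die (their coefficients are purely imaginary), leaving $-\tfrac{z^3}{3}\log(2\sin z)-\tfrac{z}{2}\z(3)-\tfrac14\im\Li_4(e^{-2iz})$ and $-\tfrac{z^4}{4}\log(2\sin z)-\tfrac{3z^2}{4}\z(3)+\tfrac38\z(5)-\tfrac38\re\Li_5(e^{-2iz})$. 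Substituting all of this into $\ls_5^{(1)}(z)=-z\int_0^z\lgs{t}{z}^3dt+\int_0^z(z-t)\lgs{t}{z}^3dt$ and collecting produces \eqref{eq:ls-5-1}.

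I expect the only genuine obstacle to be the bookkeeping in this last step: tracking the powers of $2i$, taking the correct real parts, and confirming that $\z(2),\z(4)$ and the would-be $\Li_2,\Li_3$ terms cancel, so that exactly $\z(3),\z(5)$ and the symmetric weight-$4$ and weight-$5$ combinations $\im\Li_4(e^{-2iz})+4\im\Li_4(1-e^{-2iz})$ and $\re\Li_5(e^{-2iz})+8\re\Li_5(1-e^{-2iz})$ survive with the stated coefficients. (These combinations could be reduced further via reflection-type identities in the spirit of \eqref{eq:identity-imli2} and \eqref{eq:identity-reli3}, but keeping them symmetric is cleaner.) All the identities used hold on $0<z<\pi/2$ (by analytic continuation where \eqref{eq:fourier} or Corollary~\ref{cor:Li_n} is invoked), so the conclusions hold there as well.
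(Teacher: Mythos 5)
Your proposal is correct; I checked the weight-5 bookkeeping and it does produce \eqref{eq:ls-5-1} exactly. The cases $m=3,4$ coincide with the paper's own one-line derivations (the same pairs \eqref{eq:im-li2}/\eqref{eq:re-li3} and \eqref{eq:ls-3-0}/\eqref{eq:re-li4}). For $m=5$ you take a genuinely different, though equivalent, route. The paper multiplies \eqref{eq:int-exp} by $e^{-iza}$ so that the exponent becomes $\lgs{t}{z}-it$, compares real parts of the $a^4$ coefficients to obtain a single relation linking $\ls_5^{(1)}(z)$ and $\ls_5^{(3)}(z)=-\int_0^z t^3\lgs{t}{z}\,dt$, evaluates the latter by Lemma~\ref{lem:ls-m-m-2}, and then must invoke the reflection identities \eqref{eq:identity-imli2}--\eqref{eq:identity-reli3} to clean up the lower-weight $\Li_j(e^{-2iz})$ terms that Lemma~\ref{lem:ls-m-m-2} drags in. You instead split $t=z-(z-t)$, use \eqref{eq:im-li4} and the $\re\Li_5$ formula of Example~\ref{ex:Li-n} untwisted, and evaluate the shifted moments $\int_0^z(z-t)^k\lgs{t}{z}\,dt$ directly from \eqref{eq:fourier}. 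Your version trades the paper's single slick coefficient comparison for an extra integration-by-parts computation, but it buys a cleaner endgame: the shifted moments contribute only $\z(3)$, $\z(5)$ and the top-weight polylogarithm $\Li_{k+2}(e^{-2iz})$ (the $\z(2),\z(4)$ and lower-weight $\Li_j(e^{-2iz})$ coefficients are purely imaginary and drop out under $\re$), so the reflection identities are never needed and the symmetric combinations $\im\Li_4(e^{-2iz})+4\im\Li_4(1-e^{-2iz})$ and $\re\Li_5(e^{-2iz})+8\re\Li_5(1-e^{-2iz})$ appear with the stated coefficients without further simplification. Both arguments rest on the same two ingredients (Theorem~\ref{thm:sum-no-prod} via its coefficient expansions, and the Fourier series of $\log(2\sin t)$), and both extend to $0<z<\pi/2$ by the same continuation.
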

\begin{proof}
Equation \eqref{eq:ls-3-1} follows directly as a combination of equations \eqref{eq:re-li3} and \eqref{eq:im-li2}, while equation
\eqref{eq:ls-4-1} follows as a combination of \eqref{eq:re-li4} and \eqref{eq:ls-3-0}. Let us prove \eqref{eq:ls-5-1}.
	Multiplying both sides of equation \eqref{eq:int-exp} by $e^{-i z a}$ and comparing the real parts of the coefficients of $a^4$, we get
\[
\int_0^z \sqbra{t\lgs{t}{z}^3 - t^3 \lgs{t}{z}  }dt =3 \re\sqbra{\sum_{i=1}^\infty (1-e^{-2i z})^n \bra{
\frac{1}{n^5}+i\frac{z}{n^4}-\frac{z^2}{2n^3}-i\frac{z^3}{6n^2}+\frac{z^4}{24n}
}}
\]
The sum on the right is clearly a combination of polylogarithms with argument $1-e^{-2 iz}$. Meanwhile, by Lemma~\ref{lem:ls-m-m-2},
the integral 
$\ls_5^{(3)}(z)=-\int_0^z t^3 \lgs{t}{z}dt$ is given as a combination of polylogarithms with argument $e^{-2i z}$. Solving for $\ls_5^{(1)}(z)$ and simplifying the expression using identities 
\eqref{eq:identity-imli2}--\eqref{eq:identity-reli3} gives \eqref{eq:ls-5-1}.
\end{proof}
 
A similar calculation, based on equation \eqref{eq:im-li4},
shows that we also have the identity
\begin{equation}\label{eq:ls-4-0}
\ls_4^{(0)}(z)=-\int_0^z \lgs{t}{z}^3 dt = \frac{3z}{2}\z(3)+z^3 \log(2 \sin z) +\frac{3}{4}\im\Li_4(e^{-2iz})+3\im\Li_4(1-e^{-2 iz}).
\end{equation}
An equivalent evaluation of $\ls^{(0)}_4(z)$ was also given in \cite{s15}, and the particular case $z=\frac{\pi}{4}$ was used by Cantarini \& D'Aurizio in \cite{cd19} to give closed forms for certain hypergeometric series and Euler sums via Fourier-Legendre expansions. Another equivalent evaluation (of the unnormalized log-sine integral $\Ls^{(0)}_4(z)$), may be found in \cite{cglcz22}.

\section{Remarks on higher-weight log-sine integrals}\label{sec:higher-weight}
The method which we have described in the previous section is successful for evaluating low-weight log-sine integrals, which turn out to be expressible using only classical polylogarithms. Closed-forms for some higher-weight integrals frequently involve \emph{multiple polylogarithms}. 
The multiple polylogarithm of one variable is defined by \cite{bbbl99, waldschmidt02}
\[
\Li_{s_1,\ldots, s_k}(x) = \sum_{ n_1 >\ldots > n_k>0} \frac{x^{n_1}}{n_1^{s_1}\cdots n_k^{s_k}}
\]
for $(s_1,\ldots,s_k)\in \mathbb{N}^k$ and $|x|<1$. By induction on the differential identity 
$d/dx \Li_{1,s_2,\ldots,s_k}(x)= \Li_{s_2,\ldots,s_k}(x)/(1-x)$ we deduce the Taylor expansion
\begin{equation}\label{eq:Li_1..1-log}
\Li_{\{1\}^k}(x)=\frac{(-1)^k}{k!}\log(1-x)^k,
\end{equation}
where $\{1\}^k$ denotes the index 1 repeated $k$ times.
Another family of functions generalizing the classical polylogarithm, introduced by Nielsen \cite{nielsen09, k86}, is given by
\begin{equation}\label{eq:nielsen-def}
S_{m,k}(x)\coloneqq \frac{(-1)^{m+k-1}}{(m-1)!k!}\int_0^1 \log(t)^{m-1}\log(1-x t)^{k} \frac{dt}{t},
\end{equation}
(the \emph{Nielsen polylogarithms}). Here we assume the principal branch of the logarithm. The Nielsen polylogarithms are in fact multiple polylogarithms:
\[S_{m,k}(x)= \Li_{m+1,\{1\}^{k-1}}(x)\]
(with the classical polylogarithms being recovered as $S_{m,1}=\Li_{m+1}$). 
For example,
$S_{m-1,2}(x)=\sum_{n\geq 1} H_{n-1}n^{-m}x^n$.
The real$\!\!$~/~$\!\!$complex parts of $S_{m-1,2}(e^{i \pi/3})$, also referred to as \emph{multiple Glaisher$\!\!$~/~$\!\!$Clausen values}, have been shown \cite{bbk01, dk01} to be connected with Apéry-like sums, log-sine integrals, and multiple zeta values. In \cite{bs11, bs15}, Borwein \& Straub showed that $\ls_m^{(k)}$ is always expressible in terms of Nielsen polylogarithms.  Here we arrive at the same conclusion by considering a generalization of Theorem~\ref{thm:sum-no-prod}, which we give in the following theorem.
 We adopt from \cite{bs11} the notation 
\[H_{n-1}^{[k]}\coloneqq \sum_{n>n_1>\ldots >n_k} \frac{1}{n_1\cdots n_k}\]
	for the \emph{multiple harmonic numbers} (with the convention that $H^{[0]}_{n-1}\coloneqq 1$).
\begin{theorem}\label{thm:sum-gen-H} Let $0< z < \frac{\pi}{6}$, 
	$-1<a<1$, and $k\geq 0$. Then
	\begin{equation}\label{eq:int-tk-exp}
	\sum_{n=1}^\infty \frac{H_{n-1}^{[k]}}{n+a}\bra{1-e^{-2 i z}}^n = \frac{(2i)^{k+1}}{k!}\int_0^z t^k\cdot\exp\bra{a\sqbra{\log\bra{\frac{\sin t}{\sin z}}+i(z-t)}}dt.
	\end{equation}
\end{theorem}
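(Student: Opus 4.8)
The plan is to mimic the proof of Theorem~\ref{thm:sum-no-prod}, but now keeping track of the extra factor $t^k$ by repeated integration by parts, which will manufacture the multiple harmonic numbers $H_{n-1}^{[k]}$ on the series side. First I would start, as before, from the change of variables that pulls out the normalizing power: writing the right-hand side of \eqref{eq:int-tk-exp} as $\frac{(2i)^{k+1}}{k!}(1-e^{-2iz})^{-a}\int_0^z t^k (1-e^{-2it})^a\,dt$, so that the whole problem reduces to showing
\[
\frac{(2i)^{k+1}}{k!}\int_0^z t^k (1-e^{-2it})^a\,dt = (1-e^{-2iz})^{a}\sum_{n=1}^\infty \frac{H_{n-1}^{[k]}}{n+a}\bra{1-e^{-2iz}}^n.
\]
For $k=0$ this is exactly the computation in the proof of Theorem~\ref{thm:sum-no-prod} (using $H^{[0]}_{n-1}=1$), so I would set up an induction on $k$.

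The inductive step is the heart of the argument. Assume the identity holds for $k-1$, with $0< z <\frac{\pi}{6}$ so that $|1-e^{-2it}|=|2\sin t|<1$ on $[0,z]$ and all term-by-term manipulations are legitimate. In the integral $\int_0^z t^k (1-e^{-2it})^a\,dt$ I would integrate by parts, differentiating $t^k$ and integrating $(1-e^{-2it})^a$; the antiderivative of $(1-e^{-2it})^a$ is obtained exactly as in Theorem~\ref{thm:sum-no-prod} by expanding $2ie^{-2it}(1-e^{-2it})^{a-1}=\frac{d}{dt}\sum_{n\ge 1}\frac{(1-e^{-2it})^{n+a}}{n+a}\cdot\frac{1}{\text{(const)}}$ — more precisely $\int (1-e^{-2it})^a\,dt = \frac{1}{2i}\sum_{n\ge1}\frac{(1-e^{-2it})^{n+a}}{n(n+a)}$ up to an additive constant, which one checks by differentiating termwise and using $\frac{1}{n(n+a)}=\frac{1}{a}(\frac1n-\frac1{n+a})$ together with $\sum_{n\ge1}\frac{x^n}{n}=-\log(1-x)$ and $\frac{d}{dt}(1-e^{-2it})^a = a\cdot 2ie^{-2it}(1-e^{-2it})^{a-1}$. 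The boundary term at $t=0$ vanishes because of the $t^k$ factor, and the boundary term at $t=z$ feeds into the final series; the remaining integral is $-k\int_0^z t^{k-1}\bra{\text{antiderivative}}\,dt$, to which the induction hypothesis applies after interchanging sum and integral.

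The main obstacle — and the part requiring genuine care rather than routine bookkeeping — is the combinatorial identity that has to emerge when one collects the coefficient of $(1-e^{-2iz})^n$ after the inductive step: one needs the recursion
\[
\frac{H_{n-1}^{[k]}}{n+a} \;=\; \text{(something involving $\tfrac{H_{m-1}^{[k-1]}}{m+a}$ summed over $m\le n$ and the factor $\tfrac1{n}$ or $\tfrac1{m(m+a)}$)},
\]
which should reduce to the defining telescoping property $H_{n-1}^{[k]}=\sum_{m=1}^{n-1}\frac{1}{m}H_{m-1}^{[k-1]}$ once the partial-fraction identity $\frac{1}{m(m+a)}=\frac1a\bra{\frac1m-\frac1{m+a}}$ is applied and the resulting double sum is re-indexed. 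I expect a minor subtlety in matching the powers of $2i$ and the $1/k!$ against the factor of $k$ produced by integration by parts (these should combine as $\frac{1}{k!}\cdot k = \frac{1}{(k-1)!}$, consistent with the induction hypothesis), and in confirming that the constant of integration introduced in the antiderivative does not contribute — it is multiplied by $\int_0^z t^{k-1}\,dt$, which must be shown to cancel against a piece of the boundary term or, more cleanly, fixed so that the antiderivative vanishes at $t=0$. Once the algebra of these sums is organized correctly, the identity for weight $k$ follows, completing the induction; analytic continuation in $z$ to $0<z<\frac\pi2$ is not needed here since the statement is only claimed for $0<z<\frac\pi6$, though it would follow just as in Corollary~\ref{cor:Li_n} if desired.
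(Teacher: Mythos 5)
Your overall strategy (reduce to $\int_0^z t^k(1-e^{-2it})^a\,dt$ and induct on $k$) can be repaired, but as written it has a concrete error at its central step: the claimed antiderivative is wrong. Termwise differentiation of $\frac{1}{2i}\sum_{n\ge1}\frac{(1-e^{-2it})^{n+a}}{n(n+a)}$ gives
\[
\sum_{n\ge1}\frac{e^{-2it}(1-e^{-2it})^{n+a-1}}{n}
=-e^{-2it}(1-e^{-2it})^{a-1}\log\bra{e^{-2it}}
=2it\,e^{-2it}(1-e^{-2it})^{a-1},
\]
which is not $(1-e^{-2it})^a$; your displayed function is an antiderivative of $2it\,e^{-2it}(1-e^{-2it})^{a-1}$. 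The correct antiderivative vanishing at $t=0$ is $F(t)=\frac{1}{2i}\sum_{n\ge1}\frac{(1-e^{-2it})^{n+a}}{n+a}$ (no extra $1/n$), exactly as in the proof of Theorem~\ref{thm:sum-no-prod}. Two further points are glossed over. First, after integrating by parts the boundary term $z^kF(z)$ does not feed into the final series for free: to collect coefficients of $(1-e^{-2iz})^{N+a}$ you must expand $z^k$ itself as a power series in $1-e^{-2iz}$, which requires precisely $\frac{(2iz)^k}{k!}=\sum_{m\ge1}\frac{H^{[k-1]}_{m-1}}{m}(1-e^{-2iz})^m$, i.e.\ equation \eqref{eq:Li_1..1-log} evaluated at $x=1-e^{-2iz}$. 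Second, the remaining integral is $\sum_{n}\frac{1}{n+a}\int_0^z t^{k-1}(1-e^{-2it})^{n+a}\,dt$, so the induction hypothesis must be invoked with exponent $n+a\notin(-1,1)$; harmless, but it needs justification. Once these are fixed the coefficient of $(1-e^{-2iz})^{N+a}$ becomes
\[
\sum_{m=1}^{N-1}H^{[k-1]}_{m-1}\bra{\frac{1}{m(N-m+a)}-\frac{1}{(N-m+a)(N+a)}}
=\frac{1}{N+a}\sum_{m=1}^{N-1}\frac{H^{[k-1]}_{m-1}}{m}=\frac{H^{[k]}_{N-1}}{N+a},
\]
and the induction closes.

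The paper avoids all of this bookkeeping: it applies \eqref{eq:Li_1..1-log} once, at the level of the integrand, in the form $\sum_{n\ge1}H^{[k]}_{n-1}x^{n-1}=\frac{(-\log(1-x))^k}{k!\,(1-x)}$ with $x=1-e^{-2it}$, so that $\frac{(2i)^{k+1}}{k!}\,t^k(1-e^{-2it})^a=2ie^{-2it}\sum_{n\ge1}H^{[k]}_{n-1}(1-e^{-2it})^{n+a-1}$, and then integrates term by term using the exact antiderivatives $\frac{(1-e^{-2it})^{n+a}}{n+a}$ --- no induction and no integration by parts. Since the generating-function identity you would need to handle the boundary term is the same ingredient, the inductive route ends up strictly longer while still relying on it.
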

\begin{proof} 
	 By definition, $\Li_{\{1\}^{k+1}}(x)=\sum_{n\geq 1} H_{n-1}^{[k]} n^{-1}x^n$. In view of \eqref{eq:Li_1..1-log}, we have
 \begin{align*}
\frac{(2i)^{k+1}}{k!} &\int_0^z t^k \bra{\frac{\sin t}{\sin z}}^a e^{ia(z-t)}dt
=\frac{(2i)^{k+1}}{k!} (1-e^{-2iz})^{-a}\int_0^z t^k (1-e^{-2it})^a dt
\\&= (1-e^{-2iz})^{-a}\int_0^z 2i  e^{-2i t} \sum_{n=1}^\infty H_{n-1}^{[k]} (1-e^{-2it})^{n+a-1} dt
\\&=(1-e^{-2iz})^{-a} \sum_{n=1}^\infty H_{n-1}^{[k]} \sqbra{\frac{(1-e^{-2 it})^{n+a}}{n+a}}_{t=0}^{t=z}=\sum_{n=1}^\infty \frac{H_{n-1}^{[k]}}{n+a}\bra{1-e^{-2 i z}}^n,
\end{align*}
where the above calculation is valid because $|1-e^{-2iz}|<1$  for $0<z<\frac{\pi}{6}$.
\end{proof}

Comparing coefficients of $a^m$ in \eqref{eq:int-tk-exp}, we obtain a generalization of Corollary~\ref{cor:Li_n}, which again extends to $0< z < \frac{\pi}{2}$ by continuation (cf. e.g. \cite{k86} for the analytic properties of the Nielsen polylogarithms):
\begin{equation}\label{eq:Li_m+1,k}
(-1)^m S_{m,k+1 }(1-e^{-2i z}) = \frac{(2i)^{k+1}}{m!k!}\int_0^z t^k\,\bra{\log\bra{\frac{\sin t}{\sin z}}+i(z-t)}^m dt.
\end{equation}
\begin{corollary}
$\ls_m^{(k)}(z)$ is always expressible in terms of Nielsen polylogarithms. Explicitly, for $m>k\geq 0$ and $0< z< \frac{\pi}{2}$ we have
\[
\frac{\ls_m^{(k)}(z)}{(m-k-1)!}=\sum_{j=0}^{m-k-1}\sum_{r=0}^{j} \frac{(k+j-r)!}{r!\,(j - r)!} \frac{
 (-1)^{m+j + r + 1}
	(i)^{r + k + 1}}{	2^{k+j+1-r}} z^r S_{m-k-1-j,\, k+j+1-r}(1-e^{-2 iz}).
\]
\end{corollary}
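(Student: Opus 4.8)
The plan is to bootstrap from the identity \eqref{eq:Li_m+1,k}, which I first re-read as a closed-form evaluation of every \emph{complexified} shifted log-sine integral: for integers $p,\ell\geq 0$ and $0<z<\tfrac{\pi}{2}$,
\[
\int_0^z t^{\ell}\bra{\lgs{t}{z}+i(z-t)}^{p}\,dt=(-1)^{p}\frac{p!\,\ell!}{(2i)^{\ell+1}}\,S_{p,\ell+1}(1-e^{-2iz}),
\]
where the case $p=0$ is admissible and reduces, via \eqref{eq:Li_1..1-log}, to $S_{0,\ell+1}(1-e^{-2iz})=\Li_{\{1\}^{\ell+1}}(1-e^{-2iz})=(2i)^{\ell+1}z^{\ell+1}/(\ell+1)!$. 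Since the target quantity $\ls_m^{(k)}(z)=-\int_0^z t^{k}L^{\,m-k-1}\,dt$, with $L:=\lgs{t}{z}$, involves a pure power of $L$ rather than of $L+i(z-t)$, the first move is to split $L=\bra{L+i(z-t)}-i(z-t)$ and expand by the binomial theorem,
\[
L^{\,m-k-1}=\sum_{j=0}^{m-k-1}\binom{m-k-1}{j}(-i)^{j}(z-t)^{j}\bra{L+i(z-t)}^{m-k-1-j},
\]
then expand $(z-t)^{j}=\sum_{s=0}^{j}\binom{j}{s}(-1)^{s}z^{j-s}t^{s}$ and absorb $t^{s}$ into the weight $t^{k}$. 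This turns $\ls_m^{(k)}(z)$ into a double sum over $0\le s\le j\le m-k-1$ of $z^{j-s}$ times $\int_0^z t^{k+s}\bra{L+i(z-t)}^{m-k-1-j}\,dt$, to each of which the reformulation of \eqref{eq:Li_m+1,k} above applies with $p=m-k-1-j$ and $\ell=k+s$.

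Substituting these evaluations back in, the Nielsen polylogarithm occurring in the $(j,s)$ term is $S_{m-k-1-j,\,k+s+1}(1-e^{-2iz})$, so the reindexing $r:=j-s$ (equivalently $s=j-r$, with $0\le r\le j$) reproduces exactly the indices $S_{m-k-1-j,\,k+j+1-r}$ and the monomial $z^{r}$ appearing in the statement. The factorial part of the $(j,r)$ coefficient is $\binom{m-k-1}{j}\binom{j}{s}(m-k-1-j)!\,(k+s)!$, which telescopes cleanly to $(m-k-1)!\,(k+j-r)!/(r!\,(j-r)!)$; dividing through by $(m-k-1)!$ then matches the normalization on the left-hand side of the statement. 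The remaining bookkeeping is to collect the scalar factors $(-i)^{j}$, $(-1)^{s}$, $(-1)^{\,m-k-1-j}$, $(2i)^{-(k+s+1)}$ and the overall minus sign from $\ls_m^{(k)}=-\int_0^z\cdots$: the powers of $i$ combine to $i^{\,r-k-1}$, which one rewrites as $(-1)^{k+1}i^{\,r+k+1}$; the powers of $-1$ then collapse to $(-1)^{m+j+r+1}$, while the power of $2$ is $2^{\,k+j+1-r}$. Assembling these produces precisely the prefactor $\dfrac{(-1)^{m+j+r+1}i^{\,r+k+1}}{2^{\,k+j+1-r}}$ of the claimed formula.

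Two small points finish the argument. First, the degenerate layer $j=m-k-1$ (where $m-k-1-j=0$) must be treated with the $p=0$ instance of \eqref{eq:Li_m+1,k} recorded above --- this is precisely why we needed that case --- and as a sanity check the formula then reproduces the trivial value $\ls_{k+1}^{(k)}(z)=-z^{k+1}/(k+1)$. Second, \eqref{eq:Li_m+1,k} holds on the whole interval $0<z<\tfrac{\pi}{2}$ by analytic continuation, so the resulting expansion inherits that range. The main obstacle here is purely organizational: pushing the two nested binomial expansions through and verifying that the combinatorial coefficients and the $i$- and $(-1)$-powers assemble into the stated closed form; no analytic ingredient beyond Theorem~\ref{thm:sum-gen-H} and its consequence \eqref{eq:Li_m+1,k} is needed.
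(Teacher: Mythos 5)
Your proposal is correct and follows exactly the paper's own argument: split $\lgs{t}{z}=\bra{\lgs{t}{z}+i(z-t)}-i(z-t)$, apply the binomial theorem twice (once to create powers of $L+i(z-t)$, once to expand $(z-t)^j$ and absorb $t^s$ into the weight), and invoke \eqref{eq:Li_m+1,k} term by term. Your bookkeeping of the factorials, signs, and powers of $i$ and $2$ checks out, and the explicit treatment of the $p=0$ layer via \eqref{eq:Li_1..1-log} is a nice touch the paper leaves implicit.
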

\begin{proof} The claim follows immediately from \eqref{eq:Li_m+1,k} by writing
 \begin{align*}
 \ls_m^{(k)}(z)&=-\int_0^z t^k \bra{\lgs{t}{z}+i(z-t) -i(z-t)  }^{m-k-1}dt
 \\&=-\sum_{j=0}^{m-k-1}\binom{m-k-1}{j}\int_0^z t^k (-i (z-t))^{j}\bra{\log\bra{\frac{\sin t}{\sin z}}+i(z-t)}^{m-k-1-j}dt
 \end{align*}
 and expanding $(z-t)^{j}$ in the last integral by a second application of the binomial theorem.
\end{proof}
Equation \eqref{eq:Li_m+1,k} allows us to express Nielsen polylogarithms in terms of log-sine integrals of the same weight (or lower). Because we've already derived closed-forms in terms of classical polylogarithms for all log-sine integrals of weight $\leq 4$, we thus recover the fact that Nielsen polylogarithms of weight $\leq 4$ are reducible (cf. \cite[\S~3.2]{cgr21}). In particular, we deduce the following identities.
\begin{example}  For $0< z < \frac{\pi}{2}$,
\begin{align}
S_{0,2}(1-e^{-2i z}) &= -2 z^2 \label{eq:li11}
\displaybreak[1]
\\
S_{1,2}(1-e^{-2iz}) &=  -4 \ls_3^{(1)}(z)+i  \frac{2 z^3}{3}
\displaybreak[1]
\\
S_{2,2}(1-e^{-2iz}) &=  \frac{ z^4}{6}+2 \ls_4^{(1)}(z) + i \bra{ 4\ls_3^{(1)}(z)- 4z \ls_2^{(0)}(z)}\label{eq:li31}
\displaybreak[1]
\\
\re S_{3,2}(1-e^{-2i z})&=-\frac{2}{3}\ls_5^{(1)}(z)+2 \ls_5^{(3)}(z)-4 z\ls_4^{(2)}(z)+2 z^2 \ls_3^{(1)}(z)\label{eq:reli41}
\displaybreak[1]
\\
\im S_{3,2}(1-e^{-2i z})&=-\frac{z^5}{30}+2 \ls_5^{(2)}(z)-2z \ls_4^{(1)}(z)
\label{eq:imli41}
\displaybreak[1]
\\
S_{2,3}(1-e^{-2 iz}) &= 4\ls_5^{(3)}(z)-4z \ls_4^{(2)}(z) + i \bra{2\ls_5^{(2)}(z)+\frac{z^5}{15}} 
\displaybreak[1]
\\
\re S_{3,3}(1-e^{-2 iz}) &= \frac{z^6}{90}-2 \ls_6^{(3)}(z)+2 z \ls_5^{(2)}(z)
\end{align}
\end{example}
Note that all of the log-sine integrals appearing in \eqref{eq:reli41}
have been evaluated in terms of classical polylogarithms in the previous section, showing that  $\re S_{3,2}(1-e^{-2 i z})$ is reducible in terms of classical polylogarithms. In fact, as we will see in section \S~\ref{sec:nielsen-functional-equations}, this follows from a more general two-term functional equation for $S_{3,2}$ (Lemma~\ref{lem:s32}), while another three-term equation (Lemma~\ref{lem:s32-three-term}) shows that $\im S_{3,2}(1-e^{-2 i z})$ can be written as a combination of $\im S_{3,2}(e^{-2i z})$ and classical polylogarithms.
 We also note that taking $z=\pi/6$ in some of the equations above recovers several identities displayed in \cite{bbk01}.

Using Theorem~\ref{thm:sum-gen-H} and its corollaries, we may derive closed-expressions for log-sine integrals such as the following.
\begin{corollary}\label{cor:ls-5-6}
	For $0< z< \frac{\pi}{2}$, we have the identities
\begin{align}
\ls_5^{(0)}(z) &= -\frac{z^5}{5} + 3 \im S_{3,2}(1-e^{-2 iz}) -12\im \Li_5(1-e^{-2 iz}) - 6z \re \Li_4(1-e^{-2 iz})
\label{eq:ls-5-0}\displaybreak[1]
\\
\ls_5^{(2)}(z) &= -\frac{7z^5}{30} + \frac12 \im S_{3,2}(1-e^{-2 iz}) + z \re \Li_4(1-e^{-2 iz}) - z^2 \im \Li_3(1-e^{-2 iz})
\label{eq:ls-5-2}\displaybreak[1]
\\
\ls_6^{(0)}(z) &= z^5 \log(2 \sin z) +5 z^3 \z(3)-\frac{15 z}{2}\z(5) -15 \im S_{4,2}(1-e^{-2 i z})\label{eq:ls-6-0}
\\&\quad-\frac{15}{4}\bra{ \im\Li_6(e^{-2 iz})-16 \im\Li_6(1-e^{-2 iz})} +30 z \re \Li_5(1-e^{-2 i z})\notag
\end{align}
\end{corollary}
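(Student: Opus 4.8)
The plan is to specialize the formula \eqref{eq:Li_m+1,k} --- obtained from Theorem~\ref{thm:sum-gen-H} by comparing coefficients of $a^m$ --- to a few well-chosen pairs $(m,k)$, then separate real and imaginary parts and solve for the required log-sine integrals. Write $L\coloneqq\lgs{t}{z}$. Expanding $\bra{L+i(z-t)}^m$ by the binomial theorem, and then each power $(z-t)^p$ by a second binomial expansion, turns $\int_0^z t^k\bra{L+i(z-t)}^m\,dt$ into a $\mathbb{C}$-linear combination of integrals $\int_0^z t^j L^q\,dt=-\ls_{j+q+1}^{(j)}(z)$ and of the elementary boundary integrals $\int_0^z t^a(z-t)^b\,dt=z^{a+b+1}\tfrac{a!\,b!}{(a+b+1)!}$ (the terms with $q=0$). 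All log-sine integrals that occur have weight at most $m+k+1$, and among the weight-$n$ ones $\ls_n^{(n-1)}(z)=-z^n/n$ is trivial, $\ls_n^{(n-2)}(z)$ is given by Lemma~\ref{lem:ls-m-m-2}, and those of weight $\le 5$ were evaluated in \S~2; so for each target it remains only to solve a small linear system for the genuinely new integral.

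For \eqref{eq:ls-5-2} I would take $(m,k)=(3,1)$: the imaginary part of \eqref{eq:Li_m+1,k} is exactly \eqref{eq:imli41}, $\im S_{3,2}(1-e^{-2iz})=-\tfrac{z^5}{30}-2z\,\ls_4^{(1)}(z)+2\,\ls_5^{(2)}(z)$ (using $\int_0^z t(z-t)^3\,dt=z^5/20$); solving for $\ls_5^{(2)}(z)$ and inserting \eqref{eq:ls-4-1} gives \eqref{eq:ls-5-2}. For \eqref{eq:ls-5-0} I would take $(m,k)=(4,0)$; its real part, after expanding $\bra{L+i(z-t)}^4$ and using $\int_0^z(z-t)^4\,dt=z^5/5$, reads $12\,\im\Li_5(1-e^{-2iz})=-\ls_5^{(0)}(z)+6z^2\ls_3^{(0)}(z)-12z\,\ls_4^{(1)}(z)+6\,\ls_5^{(2)}(z)+\tfrac{z^5}{5}$. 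Solving for $\ls_5^{(0)}(z)$ and substituting \eqref{eq:ls-3-0}, \eqref{eq:ls-4-1} and the just-established \eqref{eq:ls-5-2}, the $\im\Li_3(1-e^{-2iz})$-terms cancel, the powers of $z$ collapse to $-z^5/5$, and one obtains \eqref{eq:ls-5-0}.

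The weight-$6$ identity \eqref{eq:ls-6-0} requires one preliminary step, since it involves the irreducible quantity $\im S_{4,2}(1-e^{-2iz})$. First I would apply \eqref{eq:Li_m+1,k} with $(m,k)=(4,1)$: its imaginary part expresses $\im S_{4,2}(1-e^{-2iz})$ as a $\mathbb{Q}[z]$-combination of $\ls_6^{(2)}(z)$, $\ls_6^{(4)}(z)$ and the weight-$\le 5$ integrals $\ls_5^{(1)},\ls_5^{(3)},\ls_4^{(2)},\ls_3^{(1)}$; since $\ls_6^{(4)}(z)$ is given by Lemma~\ref{lem:ls-m-m-2} and the others by \S~2, this determines $\ls_6^{(2)}(z)$. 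Then I would apply \eqref{eq:Li_m+1,k} with $(m,k)=(5,0)$, whose real part is $-60\,\im\Li_6(1-e^{-2iz})=-\ls_6^{(0)}(z)-10\int_0^z L^3(z-t)^2\,dt+5\int_0^z L(z-t)^4\,dt$; expanding the two integrals in log-sine form produces, besides $\ls_6^{(0)}(z)$, only $\ls_6^{(2)}(z)$, $\ls_6^{(4)}(z)$, $\ls_2^{(0)}(z)$ and integrals of weight $\le 5$ ($\ls_6^{(3)}$ does not occur). Substituting the value of $\ls_6^{(2)}(z)$ obtained above, Lemma~\ref{lem:ls-m-m-2} for $\ls_6^{(4)}(z)$ --- which is the source of the term $-\tfrac{15}{4}\im\Li_6(e^{-2iz})$ --- and \eqref{eq:ls-3-1}, \eqref{eq:ls-4-0}, \eqref{eq:ls-5-1} for the lower-weight pieces, one solves for $\ls_6^{(0)}(z)$.

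The computation is entirely mechanical, so the only genuine difficulty is the final simplification in the weight-$6$ case. After substitution, besides $\im S_{4,2}(1-e^{-2iz})$ and $\im\Li_6(1-e^{-2iz})$, one is left with contributions involving classical polylogarithms of arguments $e^{-2iz}$ and $1-e^{-2iz}$, and one must check that the $\im\Li_4$-terms and the $\re\Li_5(e^{-2iz})$-terms cancel outright, that the $\im\Li_2$-terms combine via \eqref{eq:identity-imli2} into a multiple of $z^5\log(2\sin z)$, and that the $\re\Li_3$-terms combine via \eqref{eq:identity-reli3} into multiples of $z^5\log(2\sin z)$ and $z^3\z(3)$; adding these contributions to the purely polynomial ones produces the coefficients $1$, $5$ and $-\tfrac{15}{2}$ of $z^5\log(2\sin z)$, $z^3\z(3)$ and $z\,\z(5)$ in \eqref{eq:ls-6-0}. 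The corresponding simplifications for \eqref{eq:ls-5-0} and \eqref{eq:ls-5-2} are shorter and use \eqref{eq:identity-imli2}--\eqref{eq:identity-reli3} only implicitly, via the \S~2 formulas invoked.
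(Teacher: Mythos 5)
Your proposal is correct and follows essentially the same route as the paper: specialize \eqref{eq:Li_m+1,k} at the pairs $(3,1)$, $(4,0)$ (equivalently \eqref{eq:imli41} and \eqref{eq:im-li5}) and $(4,1)$, $(5,0)$, separate real and imaginary parts, and solve the resulting linear relations using the weight-$\le 5$ evaluations, Lemma~\ref{lem:ls-m-m-2}, and \eqref{eq:identity-imli2}--\eqref{eq:identity-reli3}. In fact your two-step treatment of \eqref{eq:ls-6-0} (first pinning down $\ls_6^{(2)}$ via $(m,k)=(4,1)$, then extracting $\ls_6^{(0)}$ from $(m,k)=(5,0)$, with $\ls_6^{(4)}$ supplying the $\im\Li_6(e^{-2iz})$ term) is a more explicit account than the paper's one-line sketch, which mentions only the $(4,1)$ specialization even though that identity alone does not contain $\ls_6^{(0)}$.
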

\begin{proof}
Equation \eqref{eq:ls-5-2} follows immediately from \eqref{eq:imli41} by substituting the expression \eqref{eq:ls-4-1} which we have derived for $\ls_4^{(1)}(z)$. Now, notice that combining equations \eqref{eq:im-li5}, \eqref{eq:ls-4-1} and \eqref{eq:ls-3-0} (or by the same method as in the proof of Corollary~\ref{cor:ls-n-1}), we get
\begin{align}
-\ls_5^{(0)}(z)+6 \ls_5^{(2)}(z)&=
\int_0^z \sqbra{\lgs{t}{z}^4-6 t^2 \lgs{t}{z}^2}dt \label{eq:-ls50+6ls52}
\\&\hspace{-2em}=
-\frac{6z^5}{5} + 12\im \Li_5(1-e^{-2iz}) + 12 z \re \Li_4(1-e^{-2iz})  - 
6 z^2\im \Li_3(1-e^{-2iz}),\notag
\end{align}
from which we deduce \eqref{eq:ls-5-0}. Equation \eqref{eq:ls-6-0} is similarly obtained by setting $m=4,k=1$ in \eqref{eq:Li_m+1,k}, comparing imaginary parts, and simplifying the resulting expression via identities
\eqref{eq:identity-imli2}-\eqref{eq:identity-reli3} and known evaluations of low-weight log-sine integrals.
\end{proof}
 We can thus obtain closed-forms for normalized log-sine integrals at $z=\frac{\pi}{4}$ (equivalently, for classical log-sine integrals at $\theta=\frac{\pi}{2}$) such as
 \begin{align}
\int_0^{\frac{\pi}{4}} \log\bra{\sqrt{2}\sin t}^4 dt&=\frac{\pi ^5}{5120}
 +12 \im\Li_5(1+i)-3 \im S_{3,2}(1+i)+\frac{3\pi}{2}  \re\Li_4(1+i)
  \label{eq:at-pi4-ls50}
 \\
 \int_0^{\frac{\pi}{4}} \log\bra{\sqrt{2}\sin t}^5 dt&=
 \frac{\pi^5 }{2048}\log(2) +\frac{5 \pi^3 }{64}\z(3)
-\frac{15 \pi }{8}\z(5) -15 \im S_{4,2}(1+i)
 \\&\quad+\frac{15}{4}\beta(6)+60 \im\Li_6(1+i) +\frac{15\pi}{2} \re \Li_5(1+i)\notag
 \end{align}
 which follow from equations \eqref{eq:ls-5-0}, \eqref{eq:ls-6-0} respectively, and where $\beta(m)\coloneqq \sum_{n\geq 0} (-1)^{n}(2n+1)^{-m}$ is the Dirichlet Beta function. As remarked above, in \S~\ref{sec:nielsen-functional-equations} we prove a three-term functional equation for $S_{3,2}$ which implies that
we can further simplify \eqref{eq:at-pi4-ls50} via
 \begin{align*}
 \im S_{3,2}(1+i) =\im S_{3,2}(i)+\frac{\pi}{2}\re\Li_4(1+i)
 -\frac{151 \pi^5}{46080}+\frac{\log(2)}{2}\beta(4)-\frac{\pi^3 \log(2)^2}{384}-\frac{\pi }{4}\log(2)\z(3).
 \end{align*}

\section{Generation of Apéry-like identities}\label{sec:apery}
Driven by the success of Apéry's proof, many authors have looked for identities similar to the rapidly-converging series given in equation \eqref{eq:Apery-sums}. Koecher \cite{koecher1980} (and independently Leshchiner \cite{leshchiner81}) proved that
\begin{equation}\label{eq:koecher}
\sum_{n=1}^\infty \frac{1}{n(n^2-a^2)}=\frac{1}{2}\sum_{n=1}^\infty \frac{(-1)^{n-1}}{n^3\binom{2n}{n}} \frac{5n^2-a^2}{n^2-a^2}\prod_{k=1}^{n-1}\bra{1-\frac{a^2}{n^2}},
\end{equation}
which produces many Apéry-like sums for odd integer values of the zeta function.  Almkvist \& Granville \cite{ag99} proved a similar identity for $\zeta(4m+3)$ (first conjectured by  Borwein \& Bradley \cite{bb97}), and later Bradley \cite{b02} (independently, Rivoal \cite{rivoal04}) proved a conjectural bivariate identity due to Cohen. Many more similar identities were derived by Pilehrood \& Pilehrood \cite{pp08,pp11} using Wilf–Zeilberger theory. In this section we study an identity (given in Theorem~\ref{thm:sum-t-1-t} below) generalizing the one given in Theorem~\ref{thm:sum-no-prod}, and show that it leads to a generalization of the Koecher-Leshchiner identity \eqref{eq:koecher}. In the next sections we investigate further implications of Theorem~\ref{thm:sum-t-1-t}.

To begin, we note that Theorem~\ref{thm:sum-no-prod} can be also be seen as the particular case $b=-\frac{a}{2}$ of a more general claim:
\begin{theorem}\label{thm:sum-prod}
Let 
 $-1<a<1$ and $b\in \mathbb{R}$. For $0< z < \frac{\pi}{6}$, we have	
	\begin{equation}\label{eq:sum-prod}
	\sum_{n=1}^\infty \frac{(1-e^{-2i z})^n}{n+a}\prod_{k=1}^{n-1} \frac{k-b +\frac{a}{2}}{k+a} = 2i\int_0^z \exp\bra{a \lgs{t}{z}}\exp\bra{2ib(t-z)}dt,
	\end{equation}
\end{theorem}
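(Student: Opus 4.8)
The plan is to mimic the proof of Theorem~\ref{thm:sum-no-prod}, integrating a suitable term-by-term expansion of the integrand. First I would rewrite the integrand on the right-hand side in the variable $u=1-e^{-2it}$, using $\left(\tfrac{\sin t}{\sin z}\right)^a = (1-e^{-2iz})^{-a}(1-e^{-2it})^a$ and noting that $dt = \tfrac{du}{2i e^{-2it}} = \tfrac{du}{2i(1-u)}$. The factor $\exp(2ib(t-z))$ needs to be handled: writing $e^{2ibt} = (e^{-2it})^{-b} = (1-u)^{-b}$, so $\exp(2ib(t-z)) = (1-e^{-2iz})^{b}(1-u)^{-b}$, whence after pulling constants out the integral becomes $2i\,(1-e^{-2iz})^{b-a}\int_0^z (1-e^{-2it})^a (1-e^{-2it})^{?}\dots$ — more precisely, $2i(1-e^{-2iz})^{b-a}\int_{0}^{z} e^{-2it}(1-e^{-2it})^{a}(1-e^{-2it})^{-1}(1-e^{-2it})^{0}\cdot e^{2ibt}e^{-2ibt}\,dt$; the cleanest route is to substitute directly and get an integral of the form $\int_0^{1-e^{-2iz}} (1-u)^{-b-1}u^a\,du$ up to constants. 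Expanding $(1-u)^{-b-1}$ by the binomial series, $(1-u)^{-b-1}=\sum_{j\ge 0}\binom{-b-1}{j}(-u)^j = \sum_{j\ge 0}\frac{(b+1)(b+2)\cdots(b+j)}{j!}u^j$, and integrating $u^{a+j}$ termwise, each term contributes $\frac{(1-e^{-2iz})^{a+j+1}}{a+j+1}$ times the binomial coefficient.

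The key step is then to recognize that the resulting power series in $1-e^{-2iz}$ matches the left-hand side after reindexing $n=j+1$. One checks that the coefficient of $(1-e^{-2iz})^n$ on the right, after collecting the prefactor $(1-e^{-2iz})^{b-a}$ absorbed appropriately, is $\frac{1}{n+a}$ times a product of $n-1$ linear factors, and one verifies by a short induction (or by telescoping ratios of consecutive coefficients) that this product equals $\prod_{k=1}^{n-1}\frac{k-b+\frac a2}{k+a}$. I would double-check the constant and the exponent bookkeeping by specializing $b=-a/2$ and confirming that the product collapses to $1$ and the identity reduces exactly to \eqref{eq:int-exp}, which is Theorem~\ref{thm:sum-no-prod}.

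Justifying the term-by-term integration requires $|1-e^{-2iz}|<1$, which holds precisely because $0<z<\frac{\pi}{6}$ gives $|1-e^{-2iz}|=2\sin z<1$; this guarantees uniform convergence of the binomial series on the path of integration (for $t\in[0,z]$ one has $|1-e^{-2it}|\le 2\sin z<1$), so dominated convergence applies, exactly as in the proof of Theorem~\ref{thm:sum-no-prod}. The main obstacle I anticipate is not analytic but purely bookkeeping: correctly tracking the fractional powers $(1-e^{-2iz})^{b-a}$ and $(1-e^{-2iz})^{-b}$ under the principal branch so that they combine to integer powers $(1-e^{-2iz})^n$ with the stated rational product of coefficients — in particular making sure the $\frac a2$ (rather than $a$) in the numerator $k-b+\frac a2$ emerges correctly from the interplay between the $(1-e^{-2iz})^{-a}$ coming from $(\sin t/\sin z)^a$ and the $(1-e^{-2iz})^{b}$ coming from $e^{2ib(t-z)}$. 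Once the substitution is set up carefully this is a routine (if slightly delicate) computation.
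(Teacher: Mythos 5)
There is a genuine gap, and it sits exactly where you declare the argument to be ``routine bookkeeping.'' First, a setup slip: $\bra{\tfrac{\sin t}{\sin z}}^a = e^{ia(t-z)}(1-e^{-2iz})^{-a}(1-e^{-2it})^a$, not $(1-e^{-2iz})^{-a}(1-e^{-2it})^a$ (this extra phase is precisely why the paper's proof of Theorem~\ref{thm:sum-no-prod} carries the factor $e^{ia(z-t)}$ alongside $(\sin t/\sin z)^a$). Carrying it through, with $u=1-e^{-2it}$ and $T\coloneqq 1-e^{-2iz}$, the right-hand side of \eqref{eq:sum-prod} becomes
\[
(1-T)^{\frac a2+b}\,T^{-a}\int_0^{T} u^{a}(1-u)^{-1-\frac a2-b}\,du ,
\]
since $e^{-i(a+2b)z}=(1-T)^{\frac a2+b}$; the exponent is $-1-\tfrac a2-b$, not $-b-1$. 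Second, and more seriously: expanding $(1-u)^{-1-\frac a2-b}$ and integrating termwise yields
\[
(1-T)^{\frac a2+b}\sum_{n\ge1}\frac{T^{n}}{n+a}\prod_{k=1}^{n-1}\frac{k+b+\frac a2}{k},
\]
and this is \emph{not} term-by-term the left-hand side $\sum_{n\ge1}\frac{T^n}{n+a}\prod_{k=1}^{n-1}\frac{k-b+\frac a2}{k+a}$. A single termwise integration of $u^{a+j}$ can only ever produce one factor $(n+a)^{-1}$ per term, so the product $\prod_{k=1}^{n-1}(k+a)$ in the target denominator cannot arise this way; and the leftover prefactor $(1-T)^{\frac a2+b}$ is a genuine power series in $T$ (not an integer power of $T$) that must be convolved in. Equating the two expressions is exactly the nontrivial content of the theorem: it is either Euler's transformation ${}_2F_1\bra{1+a,1+\tfrac a2+b;2+a;T}=(1-T)^{-\frac a2-b}\,{}_2F_1\bra{1,1+\tfrac a2-b;2+a;T}$ applied to the incomplete-beta form of the integral, or, equivalently, the Cauchy-product/partial-fraction identity that the paper isolates as Theorem~\ref{thm:sum-t-1-t} (specialized to $c=-\tfrac12$). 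Your proposed ``short induction / telescoping of consecutive coefficients'' cannot succeed because the coefficients you would be comparing are simply different. (Sanity check at $a=0$: the substitution gives $e^{-2ibz}\sum_n\tfrac{T^n}{n}\binom{n-1+b}{n-1}$ while the target is $\sum_n\tfrac{T^n}{n}\binom{n-1-b}{n-1}$; both sum to $(1-e^{-2ibz})/b$, but no term-by-term matching is possible.)

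For contrast, the paper never substitutes into the integral at all: it specializes the series identity of Theorem~\ref{thm:sum-t-1-t} to $c=-\tfrac12$, $t=1-e^{-2iz}$, and then identifies the resulting sum with the integral via the classical expansions of $\cos(2tb)/\cos t$ and $\sin(2tb)/\cos t$ in powers of $\sin t$. Your route is salvageable and would in fact give a shorter, genuinely different proof --- but only after inserting Euler's hypergeometric transformation (or an equivalent binomial-convolution identity) as the key step; as written, that step is missing.
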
  
We will prove Theorem~\ref{thm:sum-prod} shortly. In the meantime, we prove first the following (even more general) identity.
\begin{theorem}\label{thm:sum-t-1-t}
	Let $-1<c<0$, $-1<a<1$, and $b\in \mathbb{R}$. Furthermore, let $t\in \mathbb{C}$ be a number satisfying both $|t|< 1$ and $|t|(1-|t|)^c< |c|^c/(c+1)^{c+1}$.
%	, with the convention that $0^0=1$.
	 Then
\begin{equation}\label{eq:sum-t-1-t}
\sum_{n=1}^\infty \frac{t^n}{n+a}\prod_{k=1}^{n-1}\frac{k-b+a(c+1)}{k+a}=\sum_{n=1}^\infty t^n(1-t)^{b+n c}\,\frac{
\bra{1+b+nc}_{n-1}
}{(n-1)!(n+a)} ,
\end{equation}
where $(x)_n\coloneqq \frac{\Gamma(x+n)}{\Gamma(x)}=x(x-1)\cdots(x-n+1)$ is the Pochhammer symbol (rising factorial).
\end{theorem}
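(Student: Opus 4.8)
The plan is to prove the identity \eqref{eq:sum-t-1-t} by establishing that both sides, viewed as functions of $t$ on the domain specified, are analytic and satisfy the same first-order linear ODE with the same initial behavior as $t\to 0$; alternatively, and perhaps more cleanly, one can verify that the power-series coefficients of $t^n$ agree on both sides. I would start with the coefficient-comparison approach. On the left-hand side the coefficient of $t^n$ is manifestly $\frac{1}{n+a}\prod_{k=1}^{n-1}\frac{k-b+a(c+1)}{k+a}$. On the right-hand side, I would expand $(1-t)^{b+nc}$ via the binomial series and collect powers of $t$; the coefficient of $t^N$ then becomes a finite sum
\[
\sum_{n=1}^{N} \frac{(1+b+nc)_{n-1}}{(n-1)!\,(n+a)}\binom{b+nc}{N-n}(-1)^{N-n},
\]
and the claim reduces to showing this equals $\frac{1}{N+a}\prod_{k=1}^{N-1}\frac{k-b+a(c+1)}{k+a}$ for every $N\ge 1$. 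This is a hypergeometric-type combinatorial identity in $N$ (with parameters $a,b,c$), and I expect it can be proved by induction on $N$, by the Wilf--Zeilberger method, or by recognizing the inner sum as a terminating ${}_2F_1$ (or ${}_3F_2$) evaluable by Vandermonde/Gauss/Saalsch\"utz.

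I expect the main obstacle to be exactly this finite-sum identity: massaging the Pochhammer factor $(1+b+nc)_{n-1}/(n-1)!$ together with $\binom{b+nc}{N-n}$ into a recognizable closed hypergeometric form is delicate because $n$ appears inside the parameters (through $nc$), so it is not a garden-variety binomial sum. A cleaner route may be the ODE approach: differentiating the left-hand side $L(t)$ and the right-hand side $R(t)$ and checking that $(1-t)\,t\,\frac{d}{dt}\big((n+a)\text{-weighted pieces}\big)$ produces a matching recursion is essentially the Lagrange-inversion content of the statement. Indeed, the convergence condition $|t|(1-|t|)^c < |c|^c/(c+1)^{c+1}$ is precisely the radius-of-convergence bound coming from Lagrange inversion applied to $w = t(1-w)^{-(c+1)}$ or a similar functional substitution, which strongly suggests that the right-hand side is obtained from the left by the substitution $t \mapsto u$ where $u$ solves such an equation, and that $(1-t)^{nc}(1+b+nc)_{n-1}/(n-1)!$ is the Lagrange-inversion expansion of the corresponding coefficient functional. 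I would therefore try to identify the functional equation $g(u)=t$ under which $\sum \frac{t^n}{n+a}\prod\frac{k-b+a(c+1)}{k+a}$ transforms into $\sum_n u^n \frac{(\cdots)_{n-1}}{(n-1)!(n+a)}$, and then apply the Lagrange--B\"urmann formula directly.

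Concretely, here is the route I would write up. First, fix $a,b,c$ in the stated ranges and treat everything as a formal power series in $t$, deferring convergence. Second, let $F(x) = \sum_{n\ge 1}\frac{x^n}{n+a}\prod_{k=1}^{n-1}\frac{k-b+a(c+1)}{k+a}$ and observe that $F$ satisfies a simple linear ODE of the form $x(1-\lambda x)F'(x) = (\text{linear in }F,x)$ reflecting the ratio of consecutive coefficients $\frac{c_{n}}{c_{n-1}} = \frac{n-1-b+a(c+1)}{(n+a)}\cdot\frac{n-1+a}{1}$ — actually the ratio is $\frac{(n-1-b+a(c+1))(n-1+a)}{(n+a)(n-1+a)}$ wait, more carefully the product telescopes so that $c_n/c_{n-1} = \frac{(n+a-1)(n-1-b+a(c+1))}{(n+a)(n+a-1)}$; I would record the exact recursion. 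Third, introduce $u = u(t)$ as the solution of the functional equation that trivializes the product $\prod_{k=1}^{n-1}(k-b+a(c+1))/(k+a)$ into the binomial-type coefficient $(1+b+nc)_{n-1}/(n-1)!$ with an extra $(1-t)^{b+nc}$ weight — this is the substitution $u = t(1-t)^{?}$, and matching exponents forces the precise power. Fourth, invoke Lagrange inversion to read off that the $t$-expansion of $F(u(t))$ has exactly the coefficients on the right-hand side of \eqref{eq:sum-t-1-t}, and finally check that the convergence hypothesis $|t|(1-|t|)^c < |c|^c/(c+1)^{c+1}$ guarantees $|u(t)| < 1$ together with convergence of the Lagrange series, so that the formal identity is an identity of analytic functions on the stated domain. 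The delicate point throughout is bookkeeping the three parameters through the Lagrange inversion; I would isolate the pure combinatorics in a single lemma about $(1+b+nc)_{n-1}$ to keep the analytic argument clean.
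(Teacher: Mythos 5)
Your first route reduces the theorem to exactly the same finite-sum identity as the paper does: expand $(1-t)^{b+nc}$ by the binomial series, reorder (Fubini), and equate coefficients of $t^N$. But you then stop at the crux. The identity
\[
\sum_{m=1}^{N} \frac{(1+b+mc)_{m-1}}{(m-1)!\,(m+a)}\binom{b+mc}{N-m}(-1)^{N-m}=\frac{1}{N+a}\prod_{k=1}^{N-1}\frac{k-b+a(c+1)}{k+a}
\]
is left as ``I expect it can be proved by induction, WZ, or Vandermonde/Saalsch\"utz,'' and you yourself point out why those are not obviously applicable (the summation index $m$ sits inside the parameters through $mc$, so it is not a terminating ${}_2F_1$ or ${}_3F_2$ in any standard sense). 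The missing idea, which makes the identity immediate, is to read it as a statement about the variable $a$ rather than $N$: the right-hand side is a rational function of $a$ with simple poles at $a=-1,\dots,-N$, the left-hand side is precisely a sum of simple-pole terms at those points, and the two agree because the $m$-th term of the left side is the residue contribution $\lim_{a\to-m}\frac{m+a}{N+a}\prod_{k=1}^{N-1}\frac{k-b+a(c+1)}{k+a}$. In other words, the finite sum is the partial-fraction decomposition in $a$ of the right-hand side; no hypergeometric summation theorem is needed. Without this (or a completed substitute), your argument has a genuine gap at its central step.

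Two further points. First, the Fubini/reordering step is not free: one must show the double series $\sum_{m,k}|t|^{m+k}|C_{m,k}|$ converges under the stated hypothesis on $t$. This is where the condition $|t|(1-|t|)^c<|c|^c/(c+1)^{c+1}$ actually enters (via $\sum_k|t|^k|\binom{\alpha}{k}|=(1-|t|)^{\alpha}$ for $\alpha<0$, using that $b+mc<0$ for large $m$, together with the radius of convergence of $\sum_m y^m(1+b+mc)_{m-1}/((m-1)!(m+a))$); ``deferring convergence'' and checking it only at the end of a formal manipulation does not suffice here. Second, your alternative ODE/Lagrange--B\"urmann route is a plausible heuristic --- the radius $|c|^c/(c+1)^{c+1}$ is indeed the Lagrange-inversion singularity for a substitution of the type $y=t(1-t)^{c}$ --- but as written it is a sequence of intentions (``I would try to identify the functional equation,'' ``matching exponents forces the precise power'') with no executed computation, and your bookkeeping of the coefficient ratio $c_N/c_{N-1}$ is garbled mid-paragraph. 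Either route could in principle be completed, but neither is completed here.
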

\begin{proof}
Consider the formal double series
\[
\sum_{m=1}^\infty \sum_{k=0}^\infty  t^{m+k}(-1)^k \binom{b+ m c}{k} \; \frac{(1+b+mc)_{m-1}}{(m-1)!(m+a)}
\eqqcolon\sum_{m=1}^\infty \sum_{k=0}^\infty t^{m+k}C_{m,k},
\]
where $\binom{b+mc}{k}=\frac{1}{k!}\frac{\Gamma(1+b+mc)}{ \Gamma(1+b+mc-k)}$ is the generalized binomial coefficient.  By the binomial theorem, for $|t|<1$ the series
\[(1-t)^{b+mc}=\sum_{k=0}^\infty t^k(-1)^k \binom{b+mc}{k}\]
converges absolutely. Hence, by Fubini's theorem for double infinite series, for values of $t$ for which the double series absolutely converges, one may write
\[
\sum_{n=1}^\infty t^m(1-t)^{b+m c}\,\frac{
	\bra{1+b+mc}_{m-1}
}{(m-1)!(m+a)}=\sum_{m=1}^\infty \sum_{k=0}^\infty t^{m+k}C_{m,k} = \sum_{n=1}^\infty t^n \sum_{\substack{m+k=n\\m\geq1, k\geq 0}} C_{m,k},
\]
thus reducing the problem to proving the finite sum identity
\[
\sum_{m=1}^n \frac{(-1)^{n-m}}{(n-m)! (m-1)!} \, \frac{\Gamma(b+( c+1) m)}{(m+a)\Gamma(1+b+(c+1)m-n)} = \frac{1}{n+a}\prod_{k=1}^{n-1}\frac{k-b+a(c+1)}{k+a}.
\]
Indeed, the left-hand side in the above equation is just the partial fraction decomposition of the right-hand side, which can be verified by considering 
$\lim_{a\to -m} \frac{m+a}{n+a}\prod_{k=1}^{n-1}\frac{k-b+a(c+1)}{k+a}$ for each $1\leq m\leq n$. We only need to prove that the double series  is absolutely convergent when $t$ satisfies the conditions in the theorem. One may check (e.g. via Stirling's approximation)
that the power series $\sum_{m\geq 1} y^m\frac{(1+b+mc)_{m-1}}{(m-1)!(m+a)}$ has radius of convergence $|c|^c/(c+1)^{c+1}$. Since $c$ is negative, $b+mc<0$ when $m$ is sufficiently large.  By the definition of generalized binomial coefficients, for $\alpha< 0$ we have $(-1)^k\binom{\alpha}{k}>0$ for all $k\geq 0$, hence
\[\sum_{k=0}^\infty |t|^k \abs{ \binom{\alpha}{k}}  = \sum_{k=0}^\infty |t|^k (-1)^k \binom{\alpha}{k}=(1-|t|)^\alpha.\]
Therefore if $|t|<1$ and $|t|(1-|t|)^c< |c|^c/(c+1)^{c+1}$, then
$\sum_{m\geq1}\sum_{k\geq0}|t^{m+k}C_{m,k}|< \infty$ as desired.  We remark  that the hypotheses of Theorem~\ref{thm:sum-t-1-t} are sufficient for the stated conclusion, though based on numerical evidence we expect the result to hold more generally (see \S~\ref{sec:further}).
\end{proof}

\begin{proof}[Proof of Theorem~\ref{thm:sum-prod}] First we prove the theorem for the case $0< z \leq \frac{\pi}{8}$.
Setting $t\coloneqq 1-e^{-2 iz}$ with $0< z \leq \frac{\pi}{8}$, we have 
$|t|=2\sin z<1$, $t(1-t)^{-1/2}=2i\sin z$ and $|t|(1-|t|)^{-1/2}<2=(\frac12)^{-\frac12}/(\frac12)^{\frac12}$. Hence, by setting $c=-\frac12$ in Theorem~\ref{thm:sum-t-1-t}, it follows that the left-hand side of \eqref{eq:sum-prod} equals
 \begin{align*}
 e^{-2 i z b}&\sum_{n=1}^\infty \frac{(2 i \sin z)^n}{n+a} \frac{\bra{1+b-\frac{n}{2}}_{n-1}}{(n-1)!}
 =\frac{e^{-2izb}}{(2i \sin z)^{a}}\sum_{n=1}^\infty
  \frac{\bra{1+b-\frac{n}{2}}_{n-1}}{(n-1)!}\int_0^z (2i\sin t)^{n+a-1}(2i\cos t)dt
 \\&=2i\sum_{n=1}^\infty \int_0^z \bra{\frac{\sin t}{\sin z}}^a e^{-2i z b}\cos t\frac{\bra{1+b-\frac{n}{2}}_{n-1}}{(n-1)!}(2 i \sin t)^{n-1}dt.
 \end{align*}
The above calculation shows that it suffices to prove that
\[
e^{2i t b}=\cos t\sum_{n=1}^\infty \frac{\bra{1+b-\frac{n}{2}}_{n-1}}{(n-1)!}(2 i \sin t)^{n-1}
\]
for $0\leq t \leq \frac{\pi}{8}$. Indeed, the real and complex parts of the above identity are precisely the well-known hypergeometric series (cf. \cite[\S~2.8,(11)-(12)]{bateman53})
\[
\frac{\cos(2tb)}{\cos t}=\sum_{n=0}^\infty \frac{(\sin t)^{2n}}{(2n)!}\prod_{k=0}^{n-1}(2k+1)^2-(2b)^2,
\quad
-\frac{2b\sin(2tb)}{\cos t}=\sum_{n=0}^\infty \frac{(\sin t)^{2n+1}}{(2n+1)!}\prod_{k=0}^{n}(2k)^2-(2b)^2.
\] 
The extension of the claim to $0< z < \frac{\pi}{6}$ follows by analytic continuation.
\end{proof}
Theorem~\ref{thm:sum-prod} indicates that we can study log-sine integrals via infinite series. Setting $c=-\frac12$ and $b=-\frac{a}{2}$ in Theorem~\ref{thm:sum-t-1-t}, we get the following identity.
\begin{lemma} Let $-1<a<1$. For $|t|< 2(\sqrt{2}-1)$, we have
	\begin{equation}\label{eq:sum-t}
	\sum_{n=1}^\infty \frac{t^n}{n+a}=\sum_{n=1}^\infty t^n(1-t)^{-\frac{n+a}{2}} \frac{\bra{1-\frac{n+a}{2}}_{n-1}}{(n-1)! (n+a)}.
	\end{equation}
	\end{lemma}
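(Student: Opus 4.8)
The plan is to read off this identity as the special case $c=-\tfrac12$, $b=-\tfrac a2$ of Theorem~\ref{thm:sum-t-1-t}; no new analytic input is needed, only bookkeeping. First I would substitute these values into the left-hand product of \eqref{eq:sum-t-1-t}. Since $a(c+1)=\tfrac a2$ and $-b=\tfrac a2$, each factor $\tfrac{k-b+a(c+1)}{k+a}$ equals $\tfrac{k+a}{k+a}=1$, so the product over $k$ telescopes to $1$ and the left side of \eqref{eq:sum-t-1-t} collapses to exactly $\sum_{n\geq1} t^n/(n+a)$. On the right-hand side, $b+nc=-\tfrac{n+a}{2}$ and $1+b+nc=1-\tfrac{n+a}{2}$, so \eqref{eq:sum-t-1-t} becomes precisely the right-hand side of \eqref{eq:sum-t}.

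It then remains to translate the hypotheses of Theorem~\ref{thm:sum-t-1-t} into the stated range $|t|<2(\sqrt2-1)$. The choices $c=-\tfrac12$ and $b=-\tfrac a2\in\mathbb{R}$ clearly satisfy $-1<c<0$ and the constraint on $b$, while $-1<a<1$ is assumed. For the constraint on $t$, one computes $|c|^c/(c+1)^{c+1}=(\tfrac12)^{-1/2}/(\tfrac12)^{1/2}=2$, so the condition $|t|(1-|t|)^c<|c|^c/(c+1)^{c+1}$ reads $|t|(1-|t|)^{-1/2}<2$. Since both sides are nonnegative when $|t|<1$, squaring gives the equivalent inequality $|t|^2<4(1-|t|)$, i.e. $|t|^2+4|t|-4<0$; the positive root of $x^2+4x-4$ is $x=2(\sqrt2-1)$, so this holds exactly for $|t|<2(\sqrt2-1)$. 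Because $2(\sqrt2-1)<1$, the auxiliary requirement $|t|<1$ is automatically subsumed, and Theorem~\ref{thm:sum-t-1-t} applies on the entire disk $|t|<2(\sqrt2-1)$, yielding \eqref{eq:sum-t}.

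There is essentially no obstacle here: the entire content is the telescoping of the product and the elementary verification that $|t|(1-|t|)^{-1/2}<2\iff|t|<2(\sqrt2-1)$, together with the remark that this bound forces $|t|<1$. The only thing to be mildly careful about is that the specialization lands inside the hypotheses of Theorem~\ref{thm:sum-t-1-t} for \emph{all} $t$ in the asserted disk, which the computation above confirms.
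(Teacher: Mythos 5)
Your proof is correct and is exactly the paper's route: the lemma is obtained by specializing Theorem~\ref{thm:sum-t-1-t} to $c=-\tfrac12$, $b=-\tfrac a2$, which the paper states without writing out the details you supply. Your verification that the product telescopes, that $|c|^c/(c+1)^{c+1}=2$, and that $|t|(1-|t|)^{-1/2}<2$ is equivalent to $|t|<2(\sqrt2-1)$ (which also forces $|t|<1$) is all accurate.
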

Specializing to $t=1-e^{-2iz}$ for $0< z \leq \frac{\pi}{8}$ gives an equivalent formulation of Theorem~\ref{thm:sum-no-prod} in terms of infinite series (where the equivalence is indicated by Theorem~\ref{thm:sum-prod}), namely,
\begin{equation}\label{eq:sum-A_n}
e^{-iza}\sum_{n=1}^\infty \frac{(1-e^{-2 iz})^n}{n+a} = \sum_{n=1}^\infty \frac{(2 i \sin z)^n}{(n-1)!} A_n(a)
\end{equation}
(also valid for $0<z <\frac{\pi}{6}$ by continuation),
where we denote 
$A_n(a)\coloneqq  \frac{1}{(n+a)}\cdot\bra{1-\frac{n+a}{2}}_{n-1}$.
This formulation is reminiscent of the Koecher-Leshchiner-type identities, highlighting a deep connection between polylogarithms and Apéry-like sums. Indeed, it implies the following identities.
\begin{lemma}\label{lem:brad-like}
\begin{equation}\label{eq:brad-sin-2n-ver1}
\begin{split}
a^2\sum_{n=1}^\infty \frac{(2\sin z)^{2n}}{n\binom{2n}{n}((2n)^2-a^2)} &\prod_{k=1}^{n-1}\bra{1-\frac{a^2}{(2k)^2}} \\&=  \cos(a z)\re \sum_{n=1}^{\infty} \frac{n(1-e^{-2iz})^n}{n^2-a^2} +a\sin(az)\im  \sum_{n=1}^{\infty} \frac{(1-e^{-2iz})^n}{n^2-a^2}.
\end{split}
\end{equation}
\begin{equation}\label{eq:brad-sin-2n}
\begin{split}
2a\sum_{n=1}^\infty \frac{(2\sin z)^{2n}}{\binom{2n}{n}((2n)^2-a^2)} &\prod_{k=1}^{n-1}\bra{1-\frac{a^2}{(2k)^2}} \\&= -a \cos( a z)\re \sum_{n=1}^{\infty} \frac{(1-e^{-2iz})^n}{n^2-a^2} +\sin(a z)\im  \sum_{n=1}^{\infty} \frac{n(1-e^{-2iz})^n}{n^2-a^2}.
\end{split}
\end{equation}
\begin{equation}\label{eq:brad-sin-2n+1-ver1}
\begin{split}
2\sum_{n=0}^\infty \binom{2n}{n}\frac{(\sin z)^{2n+1}(2n+1)}{2^{2n}((2n+1)^2-a^2)} &\prod_{k=0}^{n-1}\bra{1-\frac{a^2}{(2k+1)^2}}
\\&\hspace{-4em}=\cos(a z)\im \sum_{n=1}^{\infty} \frac{n(1-e^{-2iz})^n}{n^2-a^2} -a\sin(az)\re  \sum_{n=1}^{\infty} \frac{(1-e^{-2iz})^n}{n^2-a^2}.
\end{split}
\end{equation}
\begin{equation}\label{eq:brad-sin-2n+1}
\begin{split}
2a\sum_{n=0}^\infty \binom{2n}{n}\frac{(\sin z)^{2n+1}}{2^{2n}((2n+1)^2-a^2)} &\prod_{k=0}^{n-1}\bra{1-\frac{a^2}{(2k+1)^2}}
\\&\hspace{-4em}=a\cos(a z)\im \sum_{n=1}^{\infty} \frac{(1-e^{-2iz})^n}{n^2-a^2} +\sin(az)\re  \sum_{n=1}^{\infty} \frac{n(1-e^{-2iz})^n}{n^2-a^2}.
\end{split}
\end{equation}
\end{lemma}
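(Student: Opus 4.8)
The plan is to extract the four identities of Lemma~\ref{lem:brad-like} from the single master identity \eqref{eq:sum-A_n} by separating real and imaginary parts and by playing off the parameter $a$ against $-a$. First I would rewrite the right-hand side of \eqref{eq:sum-A_n} more explicitly. The quantity $A_n(a) = \frac{1}{n+a}\bbra{1-\frac{n+a}{2}}_{n-1}$ has a clean factorization once one separates the cases $n$ even and $n$ odd: writing out the rising factorial $\bbra{1-\tfrac{n+a}{2}}_{n-1} = \prod_{j=1}^{n-1}\bbra{1-\tfrac{n+a}{2}-(j-1)} = \prod_{j=1}^{n-1}\bbra{\tfrac{2-n-a}{2}-j+1}$, one checks that for $n=2m$ this product telescopes (after pairing terms $j$ and $n-j$) into a multiple of $\prod_{k=1}^{m-1}\bbra{k^2-\tfrac{a^2}{4}}$ divided by suitable powers of $2$, and similarly for $n=2m+1$ into $\prod_{k=0}^{m-1}\bbra{(2k+1)^2/4 - a^2/4}$. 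Combined with the elementary identities $\frac{(2m)!}{(2m-1)!}\cdot\frac{1}{m^2\binom{2m}{m}}$-type simplifications and the standard $(2i\sin z)^{2m} = (-1)^m(2\sin z)^{2m}$, $(2i\sin z)^{2m+1} = (-1)^m\, i\,(2\sin z)^{2m+1}$, the even-indexed part of $\sum_n \frac{(2i\sin z)^n}{(n-1)!}A_n(a)$ becomes a real series matching the left sides of \eqref{eq:brad-sin-2n-ver1}--\eqref{eq:brad-sin-2n}, and the odd-indexed part becomes $i$ times a real series matching \eqref{eq:brad-sin-2n+1-ver1}--\eqref{eq:brad-sin-2n+1}, after absorbing a factor $\binom{2n}{n}/2^{2n}$.

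Next I would turn to the left-hand side of \eqref{eq:sum-A_n}. Partial fractions give $\frac{1}{n+a} = \frac{1}{2}\bbra{\frac{1}{n+a}+\frac{1}{n-a}} + \frac{a}{2}\cdot\frac{1}{a}\bbra{\frac{1}{n+a}-\frac{1}{n-a}}\cdot(-1)$, so it is cleaner to add and subtract the $a\mapsto -a$ versions of \eqref{eq:sum-A_n}. Since $A_n(-a)$ is obtained from $A_n(a)$ by $a\mapsto -a$, and since the even part of $\sum_n \frac{(2i\sin z)^n}{(n-1)!}A_n(a)$ is even in $a$ while the odd part is odd in $a$ (this is exactly the even/odd-in-$a$ structure visible in the two hypergeometric series quoted in the proof of Theorem~\ref{thm:sum-prod}), the combinations $\tfrac12(\text{RHS}(a)\pm\text{RHS}(-a))$ isolate the even-indexed and odd-indexed series respectively. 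On the left, $\tfrac12\bbra{e^{-iza}\sum\frac{(1-e^{-2iz})^n}{n+a} + e^{iza}\sum\frac{(1-e^{-2iz})^n}{n-a}}$ and the analogous difference expand, via $\frac{1}{n\pm a} = \frac{n\mp a}{n^2-a^2}$ and $e^{\mp iza} = \cos(az)\mp i\sin(az)$, into precisely the linear combinations of $\cos(az)\re\sum\frac{n(1-e^{-2iz})^n}{n^2-a^2}$, $\sin(az)\im\sum\frac{(1-e^{-2iz})^n}{n^2-a^2}$, etc., that appear on the right-hand sides of the four claimed identities. Matching real and imaginary parts of the two resulting identities (one for the even part, one for the odd part) then yields \eqref{eq:brad-sin-2n-ver1} and \eqref{eq:brad-sin-2n} from one, and \eqref{eq:brad-sin-2n+1-ver1} and \eqref{eq:brad-sin-2n+1} from the other.

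Finally I would verify convergence and the range of validity: \eqref{eq:sum-A_n} holds for $0<z\le\frac{\pi}{8}$ directly from \eqref{eq:sum-t} (valid for $|t|<2(\sqrt2-1)$, and $2\sin z < 2(\sqrt2-1)$ exactly when $z<\frac{\pi}{8}$... actually $2\sin(\pi/8)=2\sin 22.5^\circ<2(\sqrt2-1)$, so the half-open range is fine), and extends to $0<z<\frac{\pi}{6}$ by analytic continuation in $z$ as already noted after \eqref{eq:sum-A_n}; the manipulations above are all at the level of absolutely convergent series for such $z$ and $-1<a<1$, $a\ne 0$ (the $a=0$ case being a limit), so no new analytic input is needed.

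The main obstacle I expect is the bookkeeping in the first paragraph: correctly reducing $\frac{(2i\sin z)^n}{(n-1)!}A_n(\pm a)$ to the central-binomial form with the products $\prod(1-a^2/(2k)^2)$ and $\prod(1-a^2/(2k+1)^2)$, and getting every power of $2$ and every sign right so that the even part is manifestly even in $a$ and the odd part manifestly odd. This is where the two classical hypergeometric expansions cited in the proof of Theorem~\ref{thm:sum-prod} (the series for $\cos(2tb)/\cos t$ and $-2b\sin(2tb)/\cos t$) should be invoked as a shortcut — with $b=-a/2$ they already package $\sum_n \frac{(2i\sin t)^{n-1}}{(n-1)!}\bbra{1-\tfrac{n-a}{2}}_{n-1}$ into $e^{-iat}/\cos t$, so the even/odd-in-$a$ decomposition is handed to us for free, and the only remaining work is the algebraic identification of coefficients.
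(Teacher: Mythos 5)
Your overall route is the paper's: start from \eqref{eq:sum-A_n}, combine it with its $a\mapsto -a$ counterpart, and separate real and imaginary parts, identifying the symmetrized combinations $A_n(a)\pm A_n(-a)$ with the central-binomial products. However, the claim on which you base the separation is false: the even-indexed part of $\sum_n \frac{(2i\sin z)^n}{(n-1)!}A_n(a)$ is \emph{not} even in $a$, nor is the odd-indexed part odd in $a$. For instance $A_2(a)=-\tfrac{a}{2(2+a)}$, which has no parity; indeed, if $A_{2n}(a)$ were even in $a$ then $A_{2n}(a)-A_{2n}(-a)$ would vanish identically, whereas it is exactly the quantity that produces \eqref{eq:brad-sin-2n}. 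The hypergeometric expansions quoted in the proof of Theorem~\ref{thm:sum-prod} do not hand you this parity for free, because they package the coefficients $(n+a)A_n(a)=\bra{1-\frac{n+a}{2}}_{n-1}$ rather than $A_n(a)$, and the extra factor $n+a$ is precisely what destroys the parity once you divide it back out.

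Consequently $\tfrac12\bra{\mathrm{RHS}(a)\pm\mathrm{RHS}(-a)}$ does \emph{not} isolate the even-indexed from the odd-indexed series; each combination still contains both. What actually separates even from odd indices is taking real versus imaginary parts, since $(2i\sin z)^n$ is real for even $n$ and purely imaginary for odd $n$, while $A_n(a)\pm A_n(-a)$ is real. The correct bookkeeping is therefore: the $+$ combination yields \eqref{eq:brad-sin-2n-ver1} from its real part and \eqref{eq:brad-sin-2n+1-ver1} from its imaginary part, while the $-$ combination yields \eqref{eq:brad-sin-2n} and \eqref{eq:brad-sin-2n+1} --- not the pairing you state, which groups \eqref{eq:brad-sin-2n-ver1} with \eqref{eq:brad-sin-2n}. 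Since you do apply both operations, all four components are present in your scheme and the four formulas would still drop out once the algebra is carried through; but as written the justification of the decomposition is wrong and the attribution of which identity comes from which combination must be corrected. The remaining ingredient --- reducing $A_{2n}(a)\pm A_{2n}(-a)$ and $A_{2n+1}(a)\pm A_{2n+1}(-a)$ to the products $\prod\bra{1-\frac{a^2}{(2k)^2}}$ and $\prod\bra{1-\frac{a^2}{(2k+1)^2}}$ over the appropriate factorials and powers of $2$ --- is exactly the computation the paper performs, and your plan for that part is sound.
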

\begin{proof}
By writing the rising factorials in their product definition, it is straightforward to check that 
\[
A_{2n}(a)+A_{2n}(-a)= 
-\frac{\prod_{k=0}^{n-1}(a^2-(2k)^2)}{ 2^{2n-2}(a^2-(2n)^2)}
,\quad A_{2n+1}(a)+A_{2n+1}(a)=
-\frac{(2n+1) \prod_{k=0}^{n-1}(a^2-(2k+1)^2)}{2^{2n-1}(a^2-(2n+1)^2)}.
\]
By \eqref{eq:sum-A_n}, we have
 \begin{align*}
\re\sum_{n=1}^\infty (1-e^{-2 i z})^n \bra{ \frac{e^{-iza}}{n+a} + \frac{e^{iza}}{n-a}}&=\re \sum_{n=1}^\infty \frac{(2 i \sin z)^n}{(n-1)!} (A_n(a)+A_n(-a))
\\&=\sum_{n=1}^\infty \frac{(-1)^{n-1}(2\sin z)^{2n}}{(2n-1)!} (A_{2n}(a)+A_{2n}(-a)),
 \end{align*}
 which simplifies to \eqref{eq:brad-sin-2n-ver1}. Taking imaginary parts instead of real parts results in \eqref{eq:brad-sin-2n+1-ver1}. Equations \eqref{eq:brad-sin-2n}, \eqref{eq:brad-sin-2n+1} are obtained similarly by considering the identities
 \[
 A_{2n}(a)-A_{2n}(-a)= 
 \frac{n\prod_{k=0}^{n-1}(a^2-(2k)^2)}{ a 2^{2n-3}(a^2-(2n)^2)}
 ,\quad A_{2n+1}(a)-A_{2n+1}(a)=
\frac{a \prod_{k=0}^{n-1}(a^2-(2k+1)^2)}{2^{2n-1}(a^2-(2n+1)^2)}.
 \]
\end{proof}
As mentioned, identities \eqref{eq:brad-sin-2n-ver1}--\eqref{eq:brad-sin-2n+1} may be seen as generalizations the Koecher-type identities discussed above.  For example,
letting $z\to\frac{\pi}{6}$ in \eqref{eq:brad-sin-2n},
% and employing the formulae 
%\[
%\sum_{n=1}^\infty \frac{(-1)^{n-1}\cos(\theta n)}{n^2- a^2} =
%\frac{1}{2a^2}\bra{-1+a \pi \frac{\cos(\theta a)}{\sin(\pi a)}},
%\quad
%\sum_{n=1}^\infty \frac{(-1)^{n-1} n \sin(\theta n)}{n^2-a^2}=\frac{\pi}{2}\,\frac{\sin (\theta a)}{\sin(\pi a)}
%\]
%(valid for $0\leq \theta < \pi$; our case corresponds to $\theta=\frac{2 \pi}{3}$),
 we obtain the identity
\begin{equation*}
\sum_{n=1}^\infty \frac{1}{\binom{2n}{n}((2n)^2-a^2)} \prod_{k=1}^{n-1}\bra{1-\frac{a^2}{(2k)^2}} =
\frac{\pi a \csc\bra{\frac{\pi a}{2}} -2 \cos \bra{\frac{\pi a}{6}}}{8a ^2},
\end{equation*} 
which was given in an equivalent form by Leshchiner in \cite{leshchiner81} (also see \cite{bbb06}). 

We also note that \eqref{eq:sum-t} implies, by the same method as in the proof of Lemma~\ref{lem:brad-like}, that we have e.g.
\begin{align}\label{eq:t-apery}
\begin{split}
\sum_{n=1}^\infty t^n \bra{ \frac{(1-t)^{a/2}}{n+a}+\frac{(1-t)^{-a/2}}{n-a} }
&=2 a^2 \sum_{n=1}^\infty \frac{ (-1)^{n-1} \bra{\frac{t}{\sqrt{1-t}}}^{2n} }{n \binom{2n}{n} ((2n)^2-a^2)} \prod_{k=1}^{n-1}\bra{1-\frac{a^2}{(2k)^2}}
\\&\hspace{-3em}+2 \sum_{n=0}^\infty \binom{2n}{n} \frac{ (-1)^n \bra{\frac{t}{\sqrt{1-t}}}^{2n+1} (2n+1) }{2^{4n} ((2n+1)^2-a^2)}\prod_{0=1}^{n-1}\bra{1-\frac{a^2}{(2k+1)^2}}
\end{split}
\end{align}
We can directly compare coefficients of $a^m$ in \eqref{eq:t-apery} to obtain Apéry-like identities. For instance, comparing coefficients of $a^2$ and recalling the Taylor expansion \eqref{eq:arcsin-odd} of $\operatorname{arcsinh}^3$, we obtain
\begin{align}
 \sum_{n=1}^\infty \frac{(-1)^{n-1}\bra{\frac{t}{\sqrt{1-t}}}^{2n}}{n^3 \binom{2n}{n}} - 2 \sum_{n=0}^\infty \binom{2n}{n} \frac{(-1)^n \bra{\frac{t}{\sqrt{1-t}}}^{2n+1}}{2^{4n}(2n+1)^3}
 =4\Li_3(t)-2 \log(1-t)\Li_2(t)-\frac{\log(1-t)^3}{3}.
\end{align}
It is interesting to note that for $t=\frac{\sqrt{5}-1}{2}$ we have $\frac{t}{\sqrt{1-t}}=1$, so formula \eqref{eq:t-apery} produces many identities relating alternating Apéry-like sums and polylogarithms  involving the golden ratio (cf. \cite{lewin91}).

\section{Hyperbolic analogues}\label{sec:hyperbolic}

Identity \eqref{eq:t-apery} makes it clear that the advantage of setting $t=1-e^{-2 i z}$ is the ability to isolate any of the two series on the right-hand side by considering real/complex parts, due to the fact that $\frac{t}{\sqrt{1-t}}=2 i \sin z$. 
 Alternatively, we can isolate these series by considering the even/odd parts of the real-valued identity which is the hyperbolic analogue of \eqref{eq:Li_m+1,k}. More precisely, we have:
 \begin{theorem}\label{thm:Li_m+1,k-hyperbolic}
 	For all $z\in \R$,
 \begin{align}
 \label{eq:Li-hyper-plus}
 (-1)^m  S_{m,k+1 }(1-e^{2 z}) &= \frac{(-2)^{k+1}}{m!k!}\int_0^z t^k\,\bra{\log\bra{\frac{\sinh t}{\sinh z}}-(z-t)}^m dt,
 \\
  \label{eq:Li-hyper-minus}
  (-1)^m  S_{m,k+1 }(1-e^{-2 z}) &= \frac{2^{k+1}}{m!k!}\int_0^z t^k\,\bra{\log\bra{\frac{\sinh t}{\sinh z}}+(z-t)}^m dt,
 \end{align}
 (where for $z=0$ we understand the integrals as $0$).	
 \end{theorem}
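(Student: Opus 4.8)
The plan is to deduce \eqref{eq:Li-hyper-plus}--\eqref{eq:Li-hyper-minus} from \eqref{eq:Li_m+1,k} by analytically continuing that identity in the variable $z$ to a complex neighbourhood of the imaginary axis and then substituting $z\mapsto\pm iz$. Write $w$ for the complexified variable and set
\[
F(w)\coloneqq (-1)^m S_{m,k+1}\bra{1-e^{-2iw}},\qquad
G(w)\coloneqq \frac{(2i)^{k+1}}{m!\,k!}\int_0^w t^k\bra{\lgs{t}{w}+i(w-t)}^m dt,
\]
the integral being taken over the straight segment from $0$ to $w$. The core of the argument is to show that $F$ and $G$ both extend to holomorphic functions on the open strip $\Sigma\coloneqq\set{w\in\mathbb{C}\colon\abs{\re w}<\tfrac{\pi}{2}}$. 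For $F$ this is quick: $e^{-2iw}\in(-\infty,0]$ only when $\re w\in\tfrac{\pi}{2}+\pi\mathbb{Z}$, so for $w\in\Sigma$ the point $1-e^{-2iw}$ avoids the cut $[1,\infty)$ of the Nielsen polylogarithm (and never hits its other branch points $0,\infty$), and since $S_{m,k+1}$ is holomorphic on $\mathbb{C}\setminus[1,\infty)$ (cf. \cite{k86}) it follows that $F$ is holomorphic on $\Sigma$. For $G$ I would substitute $t=\lambda w$ to write
\[
G(w)=\frac{(2i)^{k+1}}{m!\,k!}\,w^{k+1}\int_0^1\lambda^k\bra{\log\bra{\tfrac{\sin(\lambda w)}{\sin w}}+iw(1-\lambda)}^m d\lambda ,
\]
and then observe that for $w\in\Sigma\setminus\set{0}$ and $\lambda\in(0,1]$ the quotient $\sin(\lambda w)/\sin w$ has argument in $(-\pi,\pi)$, so the principal logarithm is well defined; the integrand is $O\bra{\lambda^k\abs{\log\lambda}^m}$ as $\lambda\to0$ and so integrable (here $k\geq0$ is used), and differentiating under the integral sign (uniformly on compact subsets) gives holomorphy of $G$ on $\Sigma\setminus\set{0}$, with the singularity at $0$ removable since $G(w)\to0$ there.

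Granting this, $F$ and $G$ agree on $\bra{0,\tfrac{\pi}{6}}$ by \eqref{eq:Li_m+1,k} --- a set having limit points in the connected open set $\Sigma$ --- so the identity theorem yields $F\equiv G$ on all of $\Sigma$. Since $\Sigma$ contains the imaginary axis, I can evaluate at $w=\pm iz$ for arbitrary $z\in\R$. Taking $w=-iz$ and substituting $t=-iu$ (so $dt=-i\,du$, with $u$ running from $0$ to $z$), and using $\sin(-iu)=-i\sinh u$, one finds
\[
\frac{\sin t}{\sin w}=\frac{\sinh u}{\sinh z},\qquad i(w-t)=z-u,\qquad t^k\,dt=(-i)^{k+1}u^k\,du ,
\]
and since $(2i)^{k+1}(-i)^{k+1}=2^{k+1}$, the identity $F(-iz)=G(-iz)$ becomes exactly \eqref{eq:Li-hyper-minus}. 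Taking instead $w=iz$ and $t=iu$ gives $\sin t/\sin w=\sinh u/\sinh z$, $i(w-t)=-(z-u)$, $t^k\,dt=i^{k+1}u^k\,du$, and $(2i)^{k+1}i^{k+1}=(-2)^{k+1}$, which turns $F(iz)=G(iz)$ into exactly \eqref{eq:Li-hyper-plus}. The case $z=0$ is the trivial identity $0=0$, matching the stated convention, and $z<0$ is handled by the same substitution.

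I expect the holomorphy of $G$ on the whole strip to be the main obstacle. The crucial sub-claim is that $\sin(\lambda w)/\sin w$ never lands on $(-\infty,0]$ for $w\in\Sigma$ and $\lambda\in(0,1]$; I would prove this from the identity $\im\sin(\lambda w)=\cos(\lambda\re w)\sinh(\lambda\im w)$, which has the sign of $\im w$ whenever $\abs{\re w}<\tfrac{\pi}{2}$, so that $\sin(\lambda w)$ and $\sin w$ lie in a common open half-plane bounded by a line through $0$ (and the quotient is a positive real when $w\in\Sigma\setminus\set{0}$ is real); one then also has to check that the singularity of the integrand at $\lambda=0$ is uniform enough on compacta to license differentiation under the integral sign. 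As a side remark, \eqref{eq:Li-hyper-minus} can alternatively be obtained with no analytic continuation, by running the proof of Theorem~\ref{thm:sum-gen-H} with the segment $[0,-iz]$ in place of $[0,z]$ --- legitimate because $\abs{1-e^{-2it}}=1-e^{-2u}<1$ holds along that segment --- and comparing coefficients of $a^m$; this device fails for \eqref{eq:Li-hyper-plus}, since $\abs{1-e^{2u}}>1$ once $u>\tfrac{1}{2}\log2$, which is exactly why the continuation argument is genuinely needed there.
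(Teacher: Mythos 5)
Your argument is correct, but it takes a genuinely different route from the paper's. The paper proves \eqref{eq:Li-hyper-plus} by rerunning the generating-function computation of Theorem~\ref{thm:sum-gen-H} directly in the hyperbolic setting, which is legitimate precisely for $0<z<\tfrac12\log 2$ (where $\abs{1-e^{2z}}<1$), and then extends to all real $z$ by real-analyticity of both sides in $z$; the identity \eqref{eq:Li-hyper-minus} is then obtained by the substitution $z\mapsto -z$. You instead complexify the already-established trigonometric identity \eqref{eq:Li_m+1,k}: you continue both sides holomorphically to the strip $\abs{\re w}<\tfrac{\pi}{2}$ and restrict to the imaginary axis $w=\pm iz$. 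Both continuation steps are needed for essentially the same reason, which your closing remark correctly isolates: the naive term-by-term derivation of the $1-e^{2z}$ case breaks down once $z>\tfrac12\log 2$. What the paper's route buys is brevity --- it stays on the real line and cites the known real-analyticity of the Nielsen polylogarithm, avoiding any discussion of branches of $\log\bra{\sin(\lambda w)/\sin w}$ --- though it does quietly assume real-analyticity of the log-sinh integral in $z$ as well. What your route buys is that both hyperbolic identities drop out simultaneously from one already-proven statement, and the strip continuation of \eqref{eq:Li_m+1,k} is a reusable fact in its own right; the price is the technical work you correctly flag as the main obstacle, namely the half-plane argument showing $\sin(\lambda w)/\sin w\notin(-\infty,0]$ on $\Sigma$ (your computation $\im\sin(\lambda w)=\cos(\lambda\re w)\sinh(\lambda\im w)$ does settle this) and the justification of differentiation under the integral sign near $\lambda=0$, both of which are routine but must be said. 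One small point worth making explicit in a final write-up: for the evaluation at $w=\pm iz$ you also need $1-e^{\pm 2z}$ to avoid the cut $[1,\infty)$ of $S_{m,k+1}$ for every real $z$, which holds since $1-e^{\pm 2z}<1$.
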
 
\begin{proof}For $z<\frac12\log(2)$ we have $|1-e^{2z}|<1$, in which case \eqref{eq:Li-hyper-plus} follows exactly as in the proof of Theorem~\ref{thm:sum-gen-H}. Since both sides of \eqref{eq:Li-hyper-plus} are real-analytic for $z\in \R$ (cf. \cite{k86}), we conclude that \eqref{eq:Li-hyper-plus} holds for all $z\in \R$ (with the convention that both sides are 0 for $z=0$). By setting $z\mapsto -z$ we obtain \eqref{eq:Li-hyper-minus}.
\end{proof}
As a corollary of Theorem~\ref{thm:Li_m+1,k-hyperbolic} we obtain a family of identities, analogous to those derived in the previous sections, involving the two-term expressions ${S_{m,k}(1-e^{2 z})\pm S_{m,k}(1-e^{-2 z})}$. Here, we denote the normalized \emph{log-sinh integrals} by
\begin{equation*}
\lsh_m^{(k)}(z)\coloneqq -\int_0^z t^k \lgsh{t}{z}^{m-1-k}dt.
\end{equation*}
\begin{corollary} Analogous to equations \eqref{eq:ls-3-1}--\eqref{eq:ls-5-1}, we have
\begin{align}
\label{eq:lsh-3-1}
\lsh_3^{(1)}(z) &= -\frac{z}{4}\bra{\Li_2(1-e^{2z})-\Li_2(1-e^{-2z}) }+\frac14 \bra{ \Li_3(1-e^{2 z})+\Li_3(1-e^{-2z})}
\displaybreak[1]
\\
\label{eq:lsh-4-1}
\lsh_4^{(1)}(z)&=\frac{z^4}{4}+\frac{z}{2}\bra{ \Li_3(1-e^{2 z})-\Li_3(1-e^{-2z})}-\frac12 \bra{ \Li_4(1-e^{2 z})+\Li_4(1-e^{-2z})}
\displaybreak[1]
\\
\lsh_5^{(1)}(z)&=\frac{z^5}{5}- \frac{3z^4}{4}\log(2 \sinh z)-\frac{3}{8}\z(5)+\frac{3 z^2}{4}\z(3) -z^3\z(2)
\label{eq:lsh-5-1}
\\&\quad+\frac{3z}{4}\bra{
\Li_4(e^{-2z})-2 \Li_4(1-e^{2 z})+2\Li_4(1-e^{-2z})}
\notag
\\&\quad+\frac{3}{8}\bra{
\Li_5(e^{-2z})+4 \Li_5(1-e^{2 z})+4\Li_5(1-e^{-2z})
}\notag
\end{align}
and analogous to \eqref{eq:ls-3-0}, \eqref{eq:ls-4-0}, we have
\begin{align}
\label{eq:lsh-3-0}
\lsh_3^{(0)}(z) &= \frac{z^3}{3}+\frac12 \bra{ \Li_3(1-e^{2 z})-\Li_3(1-e^{-2z})}
\displaybreak[1]
\\
\label{eq:lsh-4-0}
\lsh_4^{(0)}(z)&= \frac{3}{4}\bra{\Li_4(e^{-2z})-2 \Li_4(1-e^{2 z})+2\Li_4(1-e^{-2z})}+\frac{z^4}{4}-z^3 \log(2 \sinh z)
\\&\quad-\frac34 \z(4) +\frac{3z}{2}\z(3)-\frac{3 z^2}{2}\z(2)\notag
\displaybreak[1]
\end{align}
\end{corollary}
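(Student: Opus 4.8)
The plan is to transplant to the hyperbolic setting the computations behind \eqref{eq:ls-3-0}, \eqref{eq:ls-4-0} and Corollary~\ref{cor:ls-n-1}, using Theorem~\ref{thm:Li_m+1,k-hyperbolic} at $k=0$ in place of Corollary~\ref{cor:Li_n}, together with a hyperbolic substitute for Lemma~\ref{lem:ls-m-m-2}. Write $L\coloneqq\lgsh{t}{z}$. The first ingredient is a hyperbolic analogue of Example~\ref{ex:Li-n}: setting $k=0$ in Theorem~\ref{thm:Li_m+1,k-hyperbolic} and adding, resp.\ subtracting, \eqref{eq:Li-hyper-plus} and \eqref{eq:Li-hyper-minus}, the binomial expansion of $(L\mp(z-t))^m$ leaves only odd, resp.\ even, powers of $z-t$, so that $\Li_{m+1}(1-e^{-2z})+\Li_{m+1}(1-e^{2z})$ and $\Li_{m+1}(1-e^{-2z})-\Li_{m+1}(1-e^{2z})$ become, up to elementary prefactors, sums of \emph{shifted} normalized log-sinh integrals $-\int_0^z(z-t)^jL^{m-j}\,dt$. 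Using $\Li_1(1-e^{\pm2z})=\mp2z$, the cases $m=1,2,3,4$ produce explicit hyperbolic counterparts of \eqref{eq:im-li2}--\eqref{eq:im-li5} and \eqref{eq:re-li3}--\eqref{eq:re-li5}; among them are
\[
\Li_2(1-e^{-2z})-\Li_2(1-e^{2z})=-4\int_0^z L\,dt,\qquad \Li_3(1-e^{2z})+\Li_3(1-e^{-2z})=4\int_0^z(z-t)L\,dt,
\]
\[
\Li_3(1-e^{-2z})-\Li_3(1-e^{2z})=2\int_0^z L^2\,dt+\tfrac{2z^3}{3},\qquad \Li_4(1-e^{-2z})-\Li_4(1-e^{2z})=-\tfrac{2}{3}\int_0^z L^3\,dt-2\int_0^z(z-t)^2L\,dt,
\]
along with the $m=3$ ``sum'' relation and the two $m=4$ relations.

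The second ingredient replaces the Fourier expansion \eqref{eq:fourier}: for $t>0$ one has $\log(2\sinh t)=t+\log(1-e^{-2t})=t-\sum_{n\ge1}e^{-2tn}/n$. Inserting this into $\lsh_m^{(m-2)}(z)=-\int_0^z t^{m-2}\bra{\log(2\sinh t)-\log(2\sinh z)}\,dt$, integrating term by term (legitimate as the summands $e^{-2tn}/n$ are positive), and evaluating $\int_0^z t^{m-2}e^{-2tn}\,dt$ by repeated integration by parts, I obtain for $m\ge2$
\[
\lsh_m^{(m-2)}(z)=\frac{z^{m-1}}{m-1}\log(2\sinh z)-\frac{z^m}{m}+\frac{(m-2)!}{2^{m-1}}\z(m)-(m-2)!\sum_{j=0}^{m-2}\frac{z^{m-2-j}\Li_{j+2}(e^{-2z})}{(m-2-j)!\,2^{j+1}},
\]
the exact analogue of Lemma~\ref{lem:ls-m-m-2} (drop the $\re$, replace $2i$ by $2$, and add the term $-z^m/m$ coming from the linear summand of $\log(2\sinh t)$); this evaluates $\lsh_2^{(0)}$, $\lsh_4^{(2)}$ and $\lsh_5^{(3)}$. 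The weight-$\le4$ formulas now follow by linear algebra: \eqref{eq:lsh-3-0} is the first ingredient at $m=2$; for \eqref{eq:lsh-3-1} one writes $t=z-(z-t)$ to get $\lsh_3^{(1)}=z\,\lsh_2^{(0)}+\int_0^z(z-t)L\,dt$ and combines the $m=2$ ``sum'' relation with the $\Li(1-e^{\pm2z})$-form of $\lsh_2^{(0)}$ (from $m=1$); for \eqref{eq:lsh-4-1}, $t=z-(z-t)$ gives $\lsh_4^{(1)}=z\,\lsh_3^{(0)}+\int_0^z(z-t)L^2\,dt$, the $m=3$ ``sum'' relation supplies $\int_0^z(z-t)L^2\,dt$, and \eqref{eq:lsh-3-0} is substituted; and for \eqref{eq:lsh-4-0}, the $m=3$ ``difference'' relation gives $\lsh_4^{(0)}=\tfrac32\bra{\Li_4(1-e^{-2z})-\Li_4(1-e^{2z})}+3\int_0^z(z-t)^2L\,dt$, after which $\int_0^z(z-t)^2L\,dt=-z^2\lsh_2^{(0)}+2z\,\lsh_3^{(1)}-\lsh_4^{(2)}$ is fed the second ingredient for $\lsh_2^{(0)},\lsh_4^{(2)}$ and the $\Li(e^{-2z})$-form of $\lsh_3^{(1)}$, whereupon the $\Li_2(e^{-2z})$ and $\Li_3(e^{-2z})$ terms cancel.

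For \eqref{eq:lsh-5-1} I would mimic the proof of \eqref{eq:ls-5-1}. The generating-function form of Theorem~\ref{thm:Li_m+1,k-hyperbolic} at $k=0$ (the hyperbolic analogue of Theorem~\ref{thm:sum-no-prod}, valid for small $z>0$ and extended by real-analyticity) reads $\sum_{n\ge1}(1-e^{-2z})^n/(n+a)=2\int_0^z\exp\!\big(a[L+(z-t)]\big)\,dt$ and $\sum_{n\ge1}(1-e^{2z})^n/(n+a)=-2\int_0^z\exp\!\big(a[L-(z-t)]\big)\,dt$. Multiplying the first by $e^{-za}$ and the second by $e^{za}$ converts the exponents into $a(L-t)$ and $a(L+t)$; comparing coefficients of $a^4$ and adding the two identities expresses $\tfrac23\bra{\lsh_5^{(1)}(z)+\lsh_5^{(3)}(z)}$ as an explicit combination of $\Li_j(1-e^{-2z})\pm\Li_j(1-e^{2z})$ for $j=2,\dots,5$ (the $\Li_1$ terms cancelling because $\Li_1(1-e^{\pm2z})=\mp2z$). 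I then solve for $\lsh_5^{(1)}$, substitute the second ingredient for $\lsh_5^{(3)}$, and rewrite the $j=2,3$ terms via the first ingredient ($\Li_2(1-e^{-2z})-\Li_2(1-e^{2z})=4\lsh_2^{(0)}$ and $\Li_3(1-e^{2z})+\Li_3(1-e^{-2z})=4\int_0^z(z-t)L\,dt$, the latter evaluated through the second ingredient); once more the $\Li_2(e^{-2z}),\Li_3(e^{-2z})$ terms cancel and \eqref{eq:lsh-5-1} drops out. These two reductions are the hyperbolic analogues of \eqref{eq:identity-imli2}--\eqref{eq:identity-reli3}.

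The argument is conceptually routine once Theorem~\ref{thm:Li_m+1,k-hyperbolic} is available; the substance is bookkeeping. The one genuinely delicate point is that for each auxiliary quantity ($\lsh_2^{(0)}$, $\lsh_3^{(1)}$, and the $\Li_2$- and $\Li_3$-combinations) one must pick between its ``$\Li(e^{-2z})$-form'' (from the second ingredient) and its ``$\Li(1-e^{\pm2z})$-form'' (from the first) so that all lower-weight $\Li_j(e^{-2z})$ terms cancel and the expression collapses to the stated shape. The only analytic subtleties—term-by-term integration in the second ingredient, and the analytic continuation behind the generating-function step—are handled exactly as in the trigonometric development and in the proof of Theorem~\ref{thm:Li_m+1,k-hyperbolic}.
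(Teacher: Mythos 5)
Your proposal is correct and follows exactly the route the paper intends: the paper's own proof is the one-line remark that these identities ``are proved in exactly the same way as their trigonometric counterparts... via Theorem~\ref{thm:Li_m+1,k-hyperbolic},'' with $\lsh_m^{(m-2)}$ evaluated through the expansion \eqref{eq:log-sinh-fourier}, which is precisely your two ingredients. Your worked-out details (the sum/difference decompositions of \eqref{eq:Li-hyper-plus}--\eqref{eq:Li-hyper-minus}, the hyperbolic analogue of Lemma~\ref{lem:ls-m-m-2} with the extra $-z^m/m$ term, and the cancellation of the lower-weight $\Li_j(e^{-2z})$ terms) all check out.
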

All of these are proved in exactly the same way as their trigonometric counterparts in the previous sections (via Theorem~\ref{thm:Li_m+1,k-hyperbolic}), where we note that log-sinh integrals of the form $\lsh_m^{(m-2)}(z)$ are evaluated using the series expansion
\begin{equation}\label{eq:log-sinh-fourier}
\log(2\sinh t)=t-\sum_{n=1}^\infty \frac{e^{-2 t n}}{n}.
\end{equation}
By the Taylor expansion of $\operatorname{arcsinh}^2$ we obtain the hyperbolic analogue of \eqref{eq:ls-m-1-series},
\begin{equation}\label{eq:lsh-m-1-series}
\lsh^{(1)}_m(z)= \frac{(-1)^{m-1}(m-2)!}{2^m}
\sum_{n=1}^\infty \frac{(-1)^{n-1}(2 \sinh z)^{2n}}{n^m \binom{2n}{n}}, \qquad (m\geq 2),
\end{equation}
(with an analogous formula for the $\lsh_m^{(0)}(z)$). Thus, setting
$z=\frac12\log(2)$ in equations \eqref{eq:lsh-3-1}--\eqref{eq:lsh-5-1}, we immediately recover the rapidly-converging series
\begin{align}
\sum_{n=1}^\infty \frac{(-1)^{n-1}}{n^3 2^n \binom{2n}{n}} &= \frac{\z(3)}{4}-\frac{\log(2)^3}{6}
\displaybreak[1]
\\
\sum_{n=1}^\infty \frac{(-1)^{n-1}}{n^4 2^n \binom{2n}{n}} &=
4 \Li_4\left(\tfrac{1}{2}\right)+\frac{13}{4} \zeta (3) \log (2)-\frac{7}{2}\zeta(4)+\frac{5 \log ^4(2)}{24}-\zeta(2) \log ^2(2)
\displaybreak[1]
\\
\sum_{n=1}^\infty \frac{(-1)^{n-1}}{n^5 2^n \binom{2n}{n}} &=
10 \Li_5\left(\tfrac{1}{2}\right)+6 \Li_4\left(\tfrac{1}{2}\right) \log (2)-\frac{19 \zeta (5)}{2}-\frac{2}{3} \zeta (2) \log ^3(2)+\zeta (3) \log^2(2)
\\&\quad+\frac{7}{2} \zeta (4) \log (2)+\frac{19 \log ^5(2)}{120}\notag
\end{align}
which were also given by Au in \cite{au20} using the theory of multiple-zeta values (MZV). Our method instantly produces many exotic rapidly-converging series. For example, setting $z=\frac12 \log(\frac{3}{2})$ in \eqref{eq:lsh-4-1}
leads to
\begin{equation}
\sum_{n=1}^\infty \frac{(-1)^{n-1}}{n^4 6^n \binom{2n}{n}} =
4 \Li_4\left(-\tfrac{1}{2}\right)+4\Li_4\left(\tfrac{1}{3}\right)+2\log \bra{\tfrac{3}{2}}\bra{ \Li_3\left(\tfrac{1}{3}\right)-\Li_3\left(-\tfrac{1}{2}\right)} -\frac{1}{8} \log^4\left(\tfrac{3}{2}\right)
\end{equation}
whereas setting $z=\frac12 \log(\phi)$, with $\phi\coloneqq \frac{1+\sqrt{5}}{2}$ the golden ratio, leads to
\begin{equation}
\sum_{n=1}^\infty \frac{(-1)^{n-1}}{n^4 (2+\sqrt{5})^n \binom{2n}{n}} =
4 \Li_4\left(\phi^{-2}\right)+4 \Li_4\left(-\phi^{-1}\right)-2\log (\phi ) \left(\Li_3\left(-\phi^{-1}\right)-\Li_3\left(\phi^{-2}\right)\right) -\frac{1}{8} \log ^4(\phi ).
\end{equation}
\section{Functional equations for Nielsen polylogarithms}\label{sec:nielsen-functional-equations}
We can also exploit Theorem~\ref{thm:Li_m+1,k-hyperbolic} to obtain functional equations for Nielsen polylogarithms. All of the following identities have already been proved (in an equivalent form) by Charlton et al. in \cite{cgr21} using Goncharov's theory of motivic iterated integrals. Here, we show how these functional equations follow almost immediately from Theorem~\ref{thm:Li_m+1,k-hyperbolic}.
\begin{lemma}\label{lem:s32}
The Nielsen polylogarithm $S_{3,2}(z)$ satisfies the following identity:
 \begin{align*}
 \begin{split}
S_{3,2}(1-e^{2z})+S_{3,2}(1-e^{-2z}) &=
 \bra{2\Li_5(1-e^{2z})+2\Li_5(1-e^{-2z})-\Li_5(e^{-2z})}
 \\&\quad-2z\bra{\Li_4(1-e^{2z})-\Li_4(1-e^{-2z})}
 +\frac{2z^5}{15}
 \\&\quad-\frac{2z^4}{3}\log(2\sinh z)+\z(5)-2z \z(4)+2z^2\z(3)-\frac{4z^3}{3}\z(2)
 \end{split}
 \end{align*}
\end{lemma}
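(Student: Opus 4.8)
The plan is to read the identity off from the hyperbolic formulae \eqref{eq:Li-hyper-plus}--\eqref{eq:Li-hyper-minus} of Theorem~\ref{thm:Li_m+1,k-hyperbolic} specialized to $m=3$, $k=1$, and then reduce the resulting log-sinh integrals to classical polylogarithms. Writing $L\coloneqq\lgsh{t}{z}$, the two formulae read
\[
-S_{3,2}(1-e^{2z})=\tfrac{(-2)^2}{3!}\int_0^z t\,(L-(z-t))^3\,dt,\qquad
-S_{3,2}(1-e^{-2z})=\tfrac{2^2}{3!}\int_0^z t\,(L+(z-t))^3\,dt.
\]
Adding them, the odd powers of $(z-t)$ cancel, leaving $S_{3,2}(1-e^{2z})+S_{3,2}(1-e^{-2z})=-\tfrac43\int_0^z t\,L^3\,dt-4\int_0^z t\,L\,(z-t)^2\,dt$; expanding $(z-t)^2=z^2-2zt+t^2$ and using the definition of $\lsh_m^{(k)}$ turns this into the hyperbolic counterpart of \eqref{eq:reli41}, namely
\[
S_{3,2}(1-e^{2z})+S_{3,2}(1-e^{-2z})=\tfrac43\,\lsh_5^{(1)}(z)+4z^2\,\lsh_3^{(1)}(z)-8z\,\lsh_4^{(2)}(z)+4\,\lsh_5^{(3)}(z).
\]

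I would then substitute closed forms for the four log-sinh integrals. For $\lsh_5^{(1)}$ and $\lsh_3^{(1)}$ these are \eqref{eq:lsh-5-1} and \eqref{eq:lsh-3-1}. The other two are of the type $\lsh_m^{(m-2)}$, which are evaluated via the expansion \eqref{eq:log-sinh-fourier}: starting from $\lsh_m^{(m-2)}(z)=\tfrac{z^{m-1}}{m-1}\log(2\sinh z)-\tfrac{z^m}{m}+\sum_{n\ge1}\tfrac1n\int_0^z t^{m-2}e^{-2tn}\,dt$ and integrating by parts $m-2$ times gives
\[
\lsh_4^{(2)}(z)=\tfrac{z^3}{3}\log(2\sinh z)-\tfrac{z^4}{4}-\tfrac{z^2}{2}\Li_2(e^{-2z})-\tfrac z2\Li_3(e^{-2z})+\tfrac14\z(4)-\tfrac14\Li_4(e^{-2z}),
\]
\[
\lsh_5^{(3)}(z)=\tfrac{z^4}{4}\log(2\sinh z)-\tfrac{z^5}{5}-\tfrac{z^3}{2}\Li_2(e^{-2z})-\tfrac{3z^2}{4}\Li_3(e^{-2z})-\tfrac{3z}{4}\Li_4(e^{-2z})+\tfrac38\z(5)-\tfrac38\Li_5(e^{-2z}).
\]

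Finally one collects terms. The weight-$5$ contributions with argument $e^{-2z}$ combine to $-\Li_5(e^{-2z})$ and those with argument $1-e^{\pm2z}$ keep coefficient $2$; the weight-$4$ terms $\Li_4(e^{-2z})$ cancel identically, while $\Li_4(1-e^{\pm2z})$ assemble into $-2z\bigl(\Li_4(1-e^{2z})-\Li_4(1-e^{-2z})\bigr)$. The leftover weight-$3$ piece is $z^2\bigl(\Li_3(e^{-2z})+\Li_3(1-e^{2z})+\Li_3(1-e^{-2z})\bigr)$ and the leftover weight-$2$ piece is $2z^3\Li_2(e^{-2z})-z^3\Li_2(1-e^{2z})+z^3\Li_2(1-e^{-2z})$; these are cleared using the trilogarithm reflection identity $\Li_3(x)+\Li_3(1-x)+\Li_3(1-\tfrac1x)=\z(3)+\tfrac16\log^3 x+\z(2)\log x-\tfrac12\log^2 x\log(1-x)$ at $x=e^{-2z}$ (so that $1-\tfrac1x=1-e^{2z}$), the dilogarithm reflection $\Li_2(x)+\Li_2(1-x)=\z(2)-\log x\log(1-x)$ at $x=e^{-2z}$, and the Landen relation $\Li_2(1-e^{2z})=-\Li_2(1-e^{-2z})-2z^2$ (obtained from $\tfrac{y}{y-1}=1-e^{-2z}$ with $y=1-e^{2z}$), throughout using $\log(1-e^{-2z})=\log(2\sinh z)-z$. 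The main obstacle is precisely this last bookkeeping step: one must track the many polynomial, $\log(2\sinh z)$ and $\z(k)$ terms produced by the three reductions and check that they sum to exactly $\tfrac{2z^5}{15}-\tfrac{2z^4}{3}\log(2\sinh z)+\z(5)-2z\z(4)+2z^2\z(3)-\tfrac{4z^3}{3}\z(2)$; for instance the coefficient of $z^5$ comes out as $\tfrac4{15}+\tfrac23-2+2-\tfrac45=\tfrac2{15}$, and a numerical check at, say, $z=\tfrac12\log 2$ confirms that nothing is lost. Since all functions involved are real-analytic for $z>0$, the identity holds in that range without further argument.
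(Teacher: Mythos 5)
Your proposal is correct, and I verified the key intermediate formulas (the decomposition into $\tfrac43\lsh_5^{(1)}+4z^2\lsh_3^{(1)}-8z\lsh_4^{(2)}+4\lsh_5^{(3)}$, your closed forms for $\lsh_4^{(2)}$ and $\lsh_5^{(3)}$, and the coefficient bookkeeping for the $\z(k)$, $\log(2\sinh z)$ and $z^5$ terms all check out). You start exactly where the paper does --- adding the two $m=3$, $k=1$ instances of Theorem~\ref{thm:Li_m+1,k-hyperbolic} so that the odd powers of $(z-t)$ cancel --- but then the reductions diverge. The paper rewrites the weight as $z-(z-t)$, which lets it pair the resulting $\int_0^z(z-t)\lgsh{t}{z}^3dt$ term against the representation of $\Li_5(1-e^{2z})+\Li_5(1-e^{-2z})$ and eliminate it entirely; what survives is the single integral $\int_0^z(z-t)^3\lgsh{t}{z}\,dt$, evaluated directly from \eqref{eq:log-sinh-fourier}. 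You instead keep the $t$-weights, expand into four named log-sinh integrals, and import the previously derived closed forms \eqref{eq:lsh-3-1} and \eqref{eq:lsh-5-1} for the two that are not of $\lsh_m^{(m-2)}$ type; the price is that the weight-2 and weight-3 polylogarithms no longer cancel structurally and must be cleared by hand via the dilogarithm/trilogarithm reflection and Landen identities. Both routes are valid and non-circular (the corollaries you cite are established independently of the lemma); the paper's elimination is shorter and needs no classical functional equations, while yours is more mechanical and makes explicit that the lemma is equivalent to the hyperbolic analogue of \eqref{eq:reli41} combined with known low-weight evaluations. One small presentational caveat: the final collection of elementary terms should be carried out symbolically rather than certified only by a numeric spot-check, but as your partial coefficient computations show, this is routine.
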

\begin{proof} By Theorem~\ref{thm:Li_m+1,k-hyperbolic} we have
 \begin{align*}
 2 z \Li_4(1-e^{2z})+S_{3,2}(1-e^{2z}) &= \frac23 \int_0^z (z-t)\bra{\tlgsh{t}{z}-(z-t)}^3dt,
 \\
  -2 z \Li_4(1-e^{-2z})+S_{3,2}(1-e^{-2z}) &= \frac23 \int_0^z (z-t)\bra{\tlgsh{t}{z}+(z-t)}^3dt.
 \end{align*}
 Adding the above two equations gives
\begin{align}\label{eq:S23-idnetity-01}
\begin{split}
2z\bra{\Li_4(1-e^{2z})-\Li_4(1-e^{-2z})}+S_{3,2}(1-e^{2z})+S_{3,2}(1-e^{-2z})\\ =\frac{4}{3}\int_0^z \sqbra{ (z-t)\tlgsh{t}{z}^3+3(z-t)^3 \tlgsh{t}{z} }dt
\end{split}
\end{align}
Meanwhile, another application of Theorem~\ref{thm:Li_m+1,k-hyperbolic} yields
\begin{equation}\label{eq:S23-idnetity-02}
\Li_5(1-e^{2z})+\Li_5(1-e^{-2z}) =\frac{2}{3}\int_0^z\sqbra{ (z-t)\tlgsh{t}{z}^3+(z-t)^3 \tlgsh{t}{z} }dt
\end{equation}
The desired functional equation is obtained by eliminating the $\int_0^z (z-t)\tlgsh{t}{z}^3 dt$ term between equations
\eqref{eq:S23-idnetity-01}--\eqref{eq:S23-idnetity-02} and employing the series expansion \eqref{eq:log-sinh-fourier} for $\log(\sinh t)$. 
\end{proof}

By a similar argument we obtain a three-term functional equation for $S_{3,2}$:
\begin{lemma}\label{lem:s32-three-term}
	The Nielsen polylogarithm $S_{3,2}(z)$ satisfies the following identity:
	\begin{align*}
	\begin{split}
	S_{3,2}(1-e^{2z})&-S_{3,2}(1-e^{-2z})-2S_{3,2}(e^{-2z}) 
	\\&\hspace{-3em}= -3\Li_5(e^{-2z})-2z\bra{\Li_4(1-e^{2z})+\Li_4(1-e^{-2z})+\Li_4(e^{-2z})}+2\log(2\sinh z)\Li_4(e^{-2z})
	\\&\hspace{-2em}-
	\frac{2 z^5}{5}-\frac{4z^3}{3}\log(2\sinh z)^2+2\log(2\sinh z)\bra{
\frac{z^4}{3}-\zeta(4)+2z \z(3)-2z^2 \z(2)}
	\\&\hspace{-2em}-2z^2\z(3)-3z\z(4)+2\z(2)\z(3)-\z(5)
	\end{split}
	\end{align*}
\end{lemma}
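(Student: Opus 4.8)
The plan is to follow the proof of Lemma~\ref{lem:s32}, but to \emph{subtract} the two relevant instances of Theorem~\ref{thm:Li_m+1,k-hyperbolic} rather than add them. Write $L = L(t) \coloneqq \log(\sinh t/\sinh z)$. The same two identities that open the proof of Lemma~\ref{lem:s32} (the cases $m=3$, $k=1$ of \eqref{eq:Li-hyper-plus}--\eqref{eq:Li-hyper-minus}, with the weight $t$ traded for $z-t$ via $S_{3,1}=\Li_4$) state
\[
2z\,\Li_4(1-e^{2z}) + S_{3,2}(1-e^{2z}) = \tfrac{2}{3}\int_0^z (z-t)\bbra{L - (z-t)}^3\,dt,
\]
\[
-2z\,\Li_4(1-e^{-2z}) + S_{3,2}(1-e^{-2z}) = \tfrac{2}{3}\int_0^z (z-t)\bbra{L + (z-t)}^3\,dt.
\]
Subtracting them and using $(a-b)^3-(a+b)^3 = -6a^2b-2b^3$ collapses both cubics into a single integral:
\[
S_{3,2}(1-e^{2z}) - S_{3,2}(1-e^{-2z}) + 2z\bbra{\Li_4(1-e^{2z}) + \Li_4(1-e^{-2z})} = -4\int_0^z (z-t)^2 L^2\,dt - \tfrac{4z^5}{15}.
\]
In contrast with Lemma~\ref{lem:s32}, no elimination against an $\Li_5$-identity is required, because subtraction already leaves an $L^2$-integral rather than an $L^3$-integral.

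It remains to evaluate $\int_0^z (z-t)^2 L(t)^2\,dt$ in closed form. I would write $L(t) = \bbra{t - \log(2\sinh z)} - S(t)$, where $S(t) \coloneqq \sum_{n\geq 1} e^{-2tn}/n = t - \log(2\sinh t)$ is the tail in the Fourier expansion \eqref{eq:log-sinh-fourier}, and expand the square. The purely polynomial piece $\int_0^z(z-t)^2\bbra{t-\log(2\sinh z)}^2\,dt$ is elementary and yields the terms $\tfrac{z^5}{30}$, $-\tfrac{z^4}{6}\log(2\sinh z)$ and $\tfrac{z^3}{3}\log(2\sinh z)^2$. For the cross term $-2\int_0^z(z-t)^2\bbra{t-\log(2\sinh z)}S(t)\,dt$ and the tail term $\int_0^z(z-t)^2 S(t)^2\,dt$ I would integrate term by term, using $\int_0^z(z-t)^2 e^{-2Nt}\,dt = \tfrac{z^2}{2N} - \tfrac{z}{2N^2} + \tfrac{1-e^{-2Nz}}{4N^3}$ and the analogous elementary formula for $\int_0^z (z-t)^2 t\, e^{-2Nt}\,dt$. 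The ingredient not present in the weight-$\le 4$ arguments is the Cauchy square
\[
S(t)^2 \;=\; \sum_{N\geq 1}\frac{2H_{N-1}}{N}\,e^{-2tN}, \qquad\text{since}\qquad \sum_{a+b=N}\frac{1}{ab} = \frac{2H_{N-1}}{N};
\]
integrating this against $(z-t)^2$ and summing is exactly what produces the genuine Nielsen polylogarithm $S_{3,2}(e^{-2z}) = \sum_{N}H_{N-1}N^{-4}e^{-2Nz}$, with coefficient $-\tfrac12$, hence $2S_{3,2}(e^{-2z})$ after multiplying by $-4$. A short computation shows that the only polylogarithms of argument $e^{-2z}$ that survive are $\Li_4(e^{-2z})$ and $\Li_5(e^{-2z})$.

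Finally I would collect the pieces. The remaining $N$-sums are the Euler sums $\sum_N H_{N-1}N^{-k}$ for $k=2,3,4$, which the classical evaluation reduces to $\zeta(3)$, $\tfrac14\zeta(4)$ and $2\zeta(5)-\zeta(2)\zeta(3)$ respectively; combining these with the $\Li_4$, $\Li_5$, $\log(2\sinh z)$ and polynomial contributions and moving $2S_{3,2}(e^{-2z})$ to the left-hand side yields the asserted identity. Conceptually the argument is identical to that of Lemma~\ref{lem:s32}: the one substantive difference is that \emph{subtracting} (instead of adding) the two hyperbolic identities replaces the $L^3$-integral of Lemma~\ref{lem:s32} by an $L^2$-integral, so that squaring---not cubing---the Fourier series \eqref{eq:log-sinh-fourier} suffices, and the output is the Nielsen polylogarithm $S_{3,2}$ rather than a weight-$5$ one. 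The only real labor is the integration-by-parts bookkeeping for $\int_0^z (z-t)^2 t^j e^{-2Nt}\,dt$ with $j=0,1$ and the attendant tracking of which $\zeta$-values occur; as a numerical check one may specialize to $z = \tfrac{1}{2}\log 2$, which makes every term on both sides explicitly computable.
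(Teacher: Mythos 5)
Your proposal is correct and follows essentially the same route as the paper: subtracting the two $m=3$, $k=1$ instances of Theorem~\ref{thm:Li_m+1,k-hyperbolic} to reduce everything to $\int_0^z (z-t)^2\lgsh{t}{z}^2\,dt$, and then using the squared expansion of $\log(2\sinh t)$ (whose Cauchy product $\sum_{a+b=N}\tfrac1{ab}=\tfrac{2H_{N-1}}{N}$ is exactly the paper's identity \eqref{eq:log-sinh^2}) so that the $N^{-4}$ coefficient produces $S_{3,2}(e^{-2z})$ while the rest reduces to classical polylogarithms and the known Euler sums. The signs, the coefficient $-2$ of $S_{3,2}(e^{-2z})$, and the Euler-sum evaluations you quote all check out.
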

\begin{proof}
The proof is similar to the proof of Lemma~\ref{lem:s32}, only slightly more involved. For ease of notation, let us denote 
$\Li_m^\pm\coloneqq \Li_m(1-e^{\pm 2 z})$ (and likewise for $S_{m,k}^\pm$), and denote $T\coloneqq (z-t)$, $L\coloneqq \tlgsh{t}{z}$. From Theorem~\ref{thm:Li_m+1,k-hyperbolic}, it  immediately follows that
\begin{align*}
-(S_{3,2}^{+}-S_{3,2}^{-})&-2z(\Li_4^{+}+\Li_4^{-})
= \frac{2}{3}\int_0^z\bra{T(L+T)^3-T(L-T)^3}dt
\\&=\frac{2}{3}\int_0^z \bra{6T^2L^2 + 2T^5}dt
=\frac{4 z^5}{15}+4\int_0^z (z-t)^2 \tlgsh{t}{z}^2dt.
\end{align*}
The last integral in the second line can be evaluated using the series expansion
\begin{equation}\label{eq:log-sinh^2}
\log(2\sinh t)^2=\bra{t+\log(1-e^{-2t})}^2= t^2-2t \sum_{n=1}^\infty\frac{e^{-2tn}}{n}+2\sum_{n=1}^\infty \frac{H_{n-1}}{n}e^{-2 tn},
\end{equation} 
which follows from \eqref{eq:Li_1..1-log}. Upon simplification we obtain the desired functional equation, where the only non-trivial contribution comes from the term
\[
\int_0^z (z-t)^2\sum_{n=1}^\infty\frac{H_{n-1}}{n}e^{-2 tn} dt
=\sum_{n=1}^\infty H_{n-1}\bra{ \frac{1- e^{-2 n z}}{2n^4}-\frac{3}{8 n^5}-\frac{ z}{n^3}+\frac{ z^2}{ n^2}}
\]
which introduces the $S_{3,2}(e^{-2z})$ term, and where the Euler sums  $S_{m-1,2}(1)=\sum_{n\geq 1}H_{n-1} n^{-m}$ are well-known\footnote{The investigation of such sums originated in a 1742 correspondence between Euler and Goldbach, cf. \cite[pp. 741--755]{euler_correspondence_2015}.} to be expressible in terms of integer values of the Riemann zeta function.
\end{proof}
To illustrate the effectiveness of these functional equations, notice that setting $z=\frac12 \log 2$ in Lemmas~\ref{lem:s32} \& \ref{lem:s32-three-term} yields closed-form expressions (in terms of classical polylogarithms) for 
$S_{3,2}(-1)+S_{3,2}(1/2)$ and $S_{3,2}(-1)-2S_{3,2}(1/2)$ respectively, thus establishing at once closed-forms for both $S_{3,2}(-1)$ and $S_{3,2}(1/2)$.

The following three-term functional equation for $S_{4,2}$ is equivalent to the one given in
\cite[Proposition 29]{cgr21}.
\begin{lemma}\label{lem:s42}
The Nielsen polylogarithm $S_{4,2}(z)$ satisfies the following identity:
\begin{align*}
\begin{split}
S_{4,2}(1-e^{2z})&+S_{4,2}(1-e^{-2z})+S_{4,2}(e^{-2z}) 
\\&\hspace{-3em}= 2\bra{\Li_6(1-e^{2z})+\Li_6(1-e^{-2z})+\Li_6(e^{-2z})}
\\&\hspace{-2em}-z \bra{2\Li_5(1-e^{2z})-2\Li_5(1-e^{-2z})-\Li_5(e^{-2z})}-\log(2\sinh z)\Li_5(e^{-2 z})
\\&\hspace{-2em}
+\log(2\sinh z)\bra{
\frac{2 z^5}{15}+\z(5)-2z \z(4)+2z^2 \z(3)-\frac{4 z^3}{3}\z(2)}-\frac{z^4}{3}\log(2 \sinh z)^2
\\&\hspace{-2em}-\frac{z^6}{15}-\frac{2 z^3}{3} \z(3)-\frac{3 z^2}{2}\z(4)+2 z \z(2) \z(3)-z \z(5)-\frac{5}{4}\z(6)-\frac{\z(3)^2}{2}
\end{split}
\end{align*}
\end{lemma}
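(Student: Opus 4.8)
The plan is to follow the template of the proof of Lemma~\ref{lem:s32-three-term}, only with more bookkeeping. The two Nielsen terms $S_{4,2}(1-e^{2z})$ and $S_{4,2}(1-e^{-2z})$ on the left will come straight out of Theorem~\ref{thm:Li_m+1,k-hyperbolic}, while the third term $S_{4,2}(e^{-2z})$ will be manufactured by expanding $\log(2\sinh t)^2$. Throughout I write $T\coloneqq z-t$, $L\coloneqq \lgsh{t}{z}$, and abbreviate $\Li_j^{\pm}\coloneqq \Li_j(1-e^{\pm 2z})$ and $S_{m,k}^{\pm}\coloneqq S_{m,k}(1-e^{\pm 2z})$.

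First I would apply Theorem~\ref{thm:Li_m+1,k-hyperbolic} with $(m,k)=(4,1)$ at $\pm z$; writing the weight $t=z-T$ and using the $(m,k)=(4,0)$ case (which expresses $\int_0^z(L\mp T)^4\,dt$ through $\Li_5^{\pm}$) to remove the $z\int_0^z(L\mp T)^4\,dt$ piece, one gets $S_{4,2}^{\pm}\pm 2z\,\Li_5^{\pm}=-\tfrac16\int_0^z(z-t)(L\mp T)^4\,dt$; adding the two identities gives
\begin{equation}\label{eq:s42-sketch-A}
S_{4,2}^{+}+S_{4,2}^{-}+2z\bra{\Li_5^{+}-\Li_5^{-}}=-\frac13\int_0^z\sqbra{(z-t)L^4+6(z-t)^3L^2+(z-t)^5}dt.
\end{equation}
Next I would apply Theorem~\ref{thm:Li_m+1,k-hyperbolic} with $(m,k)=(5,0)$ at $\pm z$ and subtract the two resulting identities (so that the $L^5$ and $L^3T^2$ terms cancel), obtaining the second relation
\begin{equation}\label{eq:s42-sketch-B}
5\int_0^z(z-t)L^4\,dt+10\int_0^z(z-t)^3L^2\,dt+\frac{z^6}{6}=-30\bra{\Li_6^{+}+\Li_6^{-}}.
\end{equation}

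The next step is to eliminate $\int_0^z(z-t)L^4\,dt$ between \eqref{eq:s42-sketch-A} and \eqref{eq:s42-sketch-B} — precisely the analogue of eliminating $\int_0^z(z-t)L^3\,dt$ in Lemma~\ref{lem:s32} — which expresses $\int_0^z(z-t)^3L^2\,dt$ as an explicit combination of $S_{4,2}^{\pm}$, $\Li_5^{\pm}$, $\Li_6^{\pm}$ and a power of $z$. Independently I would evaluate that same integral directly: writing $L^2=\bra{\log(2\sinh t)-\log(2\sinh z)}^2$ and invoking \eqref{eq:log-sinh-fourier} for the linear term together with \eqref{eq:log-sinh^2} for $\log(2\sinh t)^2$, the integral $\int_0^z(z-t)^3L^2\,dt$ breaks into elementary pieces $\int_0^z(z-t)^3t^je^{-2nt}\,dt$. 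Each piece splits, after repeated integration by parts, into a polynomial-in-$z$ part — which on summing over $n$ yields $\z$-values and the linear Euler sums $S_{m-1,2}(1)=\sum_{n\geq1}H_{n-1}n^{-m}$ with $m\le 5$ — and an $e^{-2nz}$ part, which yields $\Li_5(e^{-2z})$, $\Li_6(e^{-2z})$, and $S_{4,2}(e^{-2z})$; the $\log(2\sinh z)$-weighted terms in the final formula arise from the cross term $-2\log(2\sinh z)\log(2\sinh t)$ and the constant term $\log(2\sinh z)^2$ in the expansion of $L^2$.

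Equating the two evaluations of $\int_0^z(z-t)^3L^2\,dt$ and transposing the $S_{4,2}(e^{-2z})$ term to the left-hand side yields the asserted functional equation, after substituting the classical closed forms $\sum_n H_{n-1}n^{-3}=\tfrac14\z(4)$, $\sum_n H_{n-1}n^{-4}=2\z(5)-\z(2)\z(3)$, $\sum_n H_{n-1}n^{-5}=\tfrac34\z(6)-\tfrac12\z(3)^2$. The identities extracted from Theorem~\ref{thm:Li_m+1,k-hyperbolic} hold for all $z\in\R$ by real-analyticity, and the series manipulations are valid for $0<z<\tfrac12\log 2$ and then for all real $z$ by analytic continuation (both sides vanishing as $z\to 0$). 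The main obstacle is purely organizational: tracking the family $\int_0^z(z-t)^3t^je^{-2nt}\,dt$ and reassembling the double sums into polylogarithms at $e^{-2z}$, their $\log(2\sinh z)$-multiples, and closed-form Euler/zeta constants. All of this is mechanical; the sole non-elementary ingredient is the closed evaluation of the weight-$\le6$ linear Euler sums displayed above.
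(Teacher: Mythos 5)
Your proposal is correct and follows essentially the same route as the paper: the same $(m,k)=(4,1)$ and $(5,0)$ instances of Theorem~\ref{thm:Li_m+1,k-hyperbolic} are combined (your relations for $S_{4,2}^{\pm}\pm 2z\Li_5^{\pm}$ and for $\Li_6^{+}+\Li_6^{-}$ check out, and eliminating $\int_0^z(z-t)L^4\,dt$ reproduces the paper's reduction to $\tfrac{2z^6}{45}+\tfrac43\int_0^z(z-t)^3L^2\,dt$), after which the remaining integral is evaluated via \eqref{eq:log-sinh^2}, with the $H_{n-1}$-series producing $S_{4,2}(e^{-2z})$ and the linear Euler sums exactly as in the paper. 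The Euler-sum evaluations you quote are the correct ones, so the argument goes through as described.
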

\begin{proof}
Again, denote 
$\Li_m^\pm\coloneqq \Li_m(1-e^{\pm 2 z})$ (and likewise for $S_{m,k}^\pm$), and denote $T\coloneqq (z-t)$, $L\coloneqq \tlgsh{t}{z}$. By Theorem~\ref{thm:Li_m+1,k-hyperbolic},
 \begin{align*}
 -(S_{4,2}^{+}+S_{4,2}^{-})&-2z(\Li_5^{+}-\Li_5^{-})+2(\Li_6^{+}+\Li_6^{-} )
 \\&= \int_0^z\sqbra{\frac{1}{6} \left(T (L-T)^4+T (L+T)^4\right)-\frac{2}{60} \left((L+T)^5-(L-T)^5\right)}dt
 \\&=\frac{4}{3}\int_0^z \bra{L^2 T^3 + \frac{1}{5}T^5}dt
 =\frac{2 z^6}{45}+\frac{4}{3}\int_0^z (z-t)^3 \tlgsh{t}{z}^2dt.
 \end{align*}
Exactly as in the proof of Lemma~\ref{lem:s32-three-term}, 
the last integral can be evaluated using \eqref{eq:log-sinh^2}. This time the only non-trivial contribution comes from 
\[
\int_0^z (z-t)^3\sum_{n=1}^\infty\frac{H_{n-1}}{n}e^{-2 tn} dt
=\sum_{n=1}^\infty H_{n-1}\bra{ \frac{3 e^{-2 n z}}{8 n^5}-\frac{3}{8 n^5}+\frac{3 z}{4n^4}-\frac{3 z^2}{4 n^3}+\frac{z^3}{2n^2} }
\]
which introduces the $S_{4,2}(e^{-2z})$ term in addition to the known Euler sums.
\end{proof}
We note that the same method may be applied to prove the $S_{4,3}$ reduction formula given in \cite[Appendix D]{cgr21} and further reduction formulas for higher weights.

\section{Discussion and further remarks}\label{sec:further}
This paper centers on log-sine integrals and their connection to Apéry-like sums. Ultimately, our results follow from setting $c=-1/2$ in Theorem~\ref{thm:sum-t-1-t}. 
There is much room for the exploration of other implications of Theorem~\ref{thm:sum-t-1-t} for general $c\in(-1,0)$.  In fact, as mentioned, we expect that the hypotheses assumed in Theorem~\ref{thm:sum-t-1-t} are sufficient but not necessary -- numerical computations suggest that the claim remains true for  $c>-1$, $b\in\mathbb{C}$,  $a\in \mathbb{C}\setminus\{-1,-2,\ldots\}$, and $|t|<1$ which also satisfies $|t(1-t)^c|<|c|^c/(c+1)^{c+1}$.  

Several remarkable identities may be deduced from Theorem~\ref{thm:sum-t-1-t}. For instance, setting ${b\coloneqq ac}$ yields the following family of identities, where the left-hand side is independent of $c\in(-1,0)$:
\[
\sum_{n=1}^\infty \frac{t^n}{n+a} = \sum_{n=1}^\infty t^n (1-t)^{c(a+n)}
\frac{(1+c(a+n))_{n-1}}{(n-1)! (n+a)}.
\]
We remark that the above identity bears a striking resemblance to the identities given by Saha \& Sinha in \cite{ss24} (cf. also \cite{rosen25}).
As an amusing example, letting $t\to -1$ in the above equation, expanding around $a=-1/2$ and and comparing coefficients of $(a+1/2)^1$, we obtain a conjectural\footnote{Strictly speaking, we have not proved the validity of Theorem~\ref{thm:sum-t-1-t} for $t=-1$.} family of infinite series converging to Catalan's constant $K=\beta(2)$ identically for $c\in(-1,0)$:
\begin{align}
\begin{split}
K&\coloneqq\sum_{n=0}^\infty \frac{(-1)^n}{(2n+1)^2}
\\&=
\sum_{n=0}^\infty \frac{(-1)^n2^{c\bra{n+\frac12}}}{(2n+1)^2} \bra{
1-c\bra{n+\tfrac12}\log(2)-\sum_{m=1}^n \frac{c\bra{n+\tfrac12}}{m+c\bra{n+\tfrac12}}
}\prod_{k=1}^n \bra{ 1+\frac{c\bra{n+\tfrac12}}{k}}
\end{split}
\end{align}
(in which the series in the first line is recovered by letting $c\to0$).

More generally, setting $b=a(c+1)-d$ in Theorem~\ref{thm:sum-t-1-t} for arbitrary $d\in \R$, we obtain the Saha-Sinha-like identity 
\begin{equation}
\frac{t}{1+a}\, {}_2F_1\bra{\begin{array}c 1,1+d\\2+a \end{array} \middle| \; t}=(1-t)^{a(c+1)-d} \sum_{n=1}^\infty \bra{t(1-t)^c}^n \frac{(1-d+a(c+1)+n c)_{n-1}}{(n-1)!(n+a)}
\end{equation}
for $t,a$ satisfying the hypotheses in Theorem~\ref{thm:sum-t-1-t}, where here again the left-hand side is constant with respect to $c\in(-1,0)$.

% BIBLIOGRAPHY
{\footnotesize
\bibliographystyle{unsrt}
\bibliography{paper_refs}	
}
\Address
\end{document}